\newcommand{\ball}[2]{\mathbf{\bar B}_{#1}(#2)}
\newcommand{\gr}{\text{Gr\ }}
\newtheorem{theorem}{Theorem}[section]
\newtheorem{definition}[theorem]{Definition}
\newtheorem{proposition}[theorem]{Proposition}
\newtheorem{lemma}[theorem]{Lemma}
\newtheorem{remark}[theorem]{Remark}
\newtheorem{corollary}[theorem]{Corollary}
\newtheorem{example}[theorem]{Example}
\begin{document}

\title{Characterizations of some transversality-type properties \thanks{This
		work was supported by the Bulgarian National Scientific Fund under Grant KP-06-H22/4/04.12.2018 and
		by the Sofia	University "St. Kliment Ohridski" fund "Research
		\& Development" under contract 80-10-107 / 16.04.2020.}%\thanks{Grants or other notes
%about the article that should go on the front page should be
%placed here. General acknowledgments should be placed at the end of the article.}
}

%\titlerunning{Short form of title}        % if too long for running head

\author{Stoyan Apostolov \and Mira Bivas \and Nadezhda Ribarska
}

%\authorrunning{Short form of author list} % if too long for running head

\institute{S. Apostolov \at
			  Faculty of Mathematics and Informatics,  Sofia University, James Bourchier Boul. 5,  1126 Sofia, Bulgaria\\
              \email{sapostolov@fmi.uni-sofia.bg}           %  \\
%             \emph{Present address:} of F. Author  %  if needed
           \and
           M. Bivas \at
			  Faculty of Mathematics and Informatics,  Sofia University, James Bourchier Boul. 5,  1126 Sofia, Bulgaria
			  and
			  Institute of
			  Mathematics and Informatics, Bulgarian Academy of Sciences, G.Bonchev str., bl. 8, 1113 Sofia, Bulgaria\\
			  \email{mira.bivas@math.bas.bg}
		  \and
		  N. Ribarska \at
		  Faculty of Mathematics and Informatics,  Sofia University, James Bourchier Boul. 5,  1126 Sofia, Bulgaria
		  and
		  Institute of
		  Mathematics and Informatics, Bulgarian Academy of Sciences, G.Bonchev str., bl. 8, 1113 Sofia, Bulgaria\\
		  \email{ribarska@fmi.uni-sofia.bg}
}

\date{Received: date / Accepted: date}
% The correct dates will be entered by the editor

\maketitle

\begin{abstract}
Primal characterizations (necessary and sufficient conditions) and slope characterizations of subtransversality, intrinsic transversality and transversality are obtained. The metric nature of intrinsic transversality is established. The relation of intrinsic transversality and tangential transversality is clarified. The equivalence of our characterization of intrinsic transversality and the primal characterization of intrinsic transversality of Thao et al. in the setting of Hilbert spaces is proved, while in general Banach spaces our characterization is less restrictive.

\keywords{(sub)transversality\and (sub)regularity\and intrinsic transversality\and tangential transversality\and primal space characterizations\and function slopes}
% \PACS{PACS code1 \and PACS code2 \and more}
 \subclass{49J53\and 46N10\and 90C30}
\end{abstract}

\begin{acknowledgements}
The authors are grateful to Prof. A. Dontchev for his useful comments and suggestions.
\end{acknowledgements}

\section{Introduction}

	Transversality is a classical concept of mathematical analysis and differential topology. Recently, it has proven to be useful in
	variational analysis as well. As it is stated in \cite{Ioffe}, the transversality-oriented language is extremely natural and convenient
	in some parts of variational analysis, including subdifferential calculus and nonsmooth optimization, as well as in proving sufficient
	conditions for linear convergence of the alternating projections algorithm (cf. \cite{DIL2015}).
	
	There are various transversality-type properties reflecting the various needs of the possible applications. In the literature there exist many notions generalizing the classical transversality as well as transversality of cones. Some of them are introduced under different names by different authors, but actually coincide. We refer to \cite{Kruger2018} for a survey of terminology and comparison of the available concepts. The central ones among them are \textit{transversality} and \textit{subtransversality}. They are also objects of study in the recent book \cite{IoffeBook}. One of the reasons for that is the close relation to metric regularity and metric subregularity, respectively. Recently the notion of intrinsic transversality which is intermediate between subtransversality and transversality has been introduced and steadily grows in importance. These notions (which some authors call ``good arrangements of sets'') have been studied in details. See , e.g., \cite{nese},\cite{suff}, \cite{dualsuff}, \cite{Kruger2017}, and the literature therein.

One of the starting points of this investigation was a question of A.Ioffe about finding a metric characterization of intrinsic transversality. In fact, a variety of characterizations of intrinsic transversality in various settings are known (Euclidean, Hilbert, Asplund, Banach and normed linear
spaces) but all of them involve the linear structure of the space. The reason is that researchers are mainly concentrated on the dual space. To the best of our knowledge, the first primal characterization of intrinsic transversality is obtained in \cite{TBCV2018} where the structure of a Hilbert space is assumed in most of the considerations.
	
We arrived to the study of transversality of sets when investigating Pontryagin's type maximum principle for  optimal control problems with terminal
constraints in infinite dimensional state space. In \cite{BKRtt} we formulated necessary optimality conditions for optimization problems in
terms of abstract Lagrange multipliers and established intersection rules for tangent cones
in Banach spaces, making use of another transversality-type property which we called tangential transversality. Many questions about this tool remained open (see \cite{BKRtt}, p. 28).
These questions were the other starting point of our investigation.

The result of our study was somewhat surprising: it happened that intrinsic transversality and tangential transversality are ``almost'' equivalent. Moreover, the relation is very easy to establish, given the characterization of intrinsic transversality via the slope of coupling function due to Ioffe and Lewis. Thus a primal space characterization of intrinsic transversality has been obtained. We put a significant effort in clarifying the exact relationship of this characterization and the primal characterization of intrinsic transversality obtained by Thao et al. in \cite{TBCV2018}, which they call property ($\cal P$). We proved that property ($\cal P$) implies our characterization in general Banach space setting and these properties are equivalent in Hilbert space setting. We would like to emphasize that the property we introduce is simpler (or at least it looks simpler) than the property ($\cal P$) -- less variables are involved.

Establishing the exact relationship between intrinsic transversality and tangential transversality helped us to obtain primal space infinitesimal characterizations and slope characterizations of both transversality and subtransversality close in nature to tangential transversality. These characterizations make it obvious that
$$\mbox{transversality} \implies
	\parbox{6em}{\begin{center}tangential\\transversality\end{center}}  \implies \parbox{6em}{\begin{center}
		intrinsic\\transversality
		\end{center}} \implies  \mbox{subtransversality} \, .$$
and neither implication is invertible. This hierarchy of the properties and of their respective slope characterizations sheds new light on the topic. There have been known primal sufficient conditions and primal necessary conditions for transversality and subtransversality, but no primal characterizations (see \cite{suff} and \cite{nese}). The relationship of our characterization to these conditions is very similar to the relationship of our characterization of intrinsic transversality to property ($\cal P$) -- we work with less points which makes the situation simpler.

	The paper is organized as follows. The second section contains some necessary definitions and known assertions. In the third section a primal characterization of subtransversality is obtained. To do it,  we prove a technical result (Lemma \ref{ttth}) allowing to pass from a local inequality to a global one. Its proof essentially appeared in \cite{BKRtt} and it is based on transfinite induction. It could have been proved using Ekeland's variational principle like most ``rate of descend'' results in the literature, but we prefer the transfinite induction because it is really
	direct to employ and requires very little thought -- a simple induction enables the transition to a global property from a local one in a
	straightforward manner. 
	In our understanding this kind of argument is natural and saves one from the necessity of seeking
	for the ``right'' function in every particular case. Moreover, a slope characterization of subtransversality is proved in the third section. Section 4 is devoted to transversality. A primal characterization of transversality is obtained from the respective characterization of subtransversality. The relation to tangential transversality is clarified emphasizing the fact that in the ``uniform'' situation of the transversality property the existing of ``a positive step'' and the existing of ``an interval $(0,\delta)$ of possible steps'' are equivalent. Using this, two slope characterizations of transversality are obtained. Section 5 deals with intrinsic transversality. A primal characterization (of purely metric nature) and a slope characterization are readily proved. The exact relation of our approach and the approach of Thao et al. in \cite{TBCV2018} is established. This is the most technical result in the paper. We obtain the coincidence of subtransversality and intrinsic transversality in the convex case as an easy consequence.

\section{Preliminaries}
Throughout the paper if $(X,d)$ is a metric space, $\mathbf{B}_{r}(x_0)$ will denote the open ball centered at $x_0$ with radius $r$:  $\mathbf{B}_{r}(x_0):=\{x\in Y\ | \ d(x,x_0)<r\}$. The closed ball will be denoted by $\mathbf{\bar B}_{r}(x_0)$.

In what follows, for given metric spaces $X$ and $Y$, $F$ is called a set-valued map between $X$ and $Y$, denoted by $F:X\rightrightarrows Y$, if $F:X\to 2^Y$. The graph of $F$, denoted by $\gr F$, is defined by 
$$\gr F:=\{(x,y)\in X\times Y\ | \ y\in F(x)\}.$$ The inverse map of $F$, $F^{-1}:Y\rightrightarrows X$ is defined by 
$$ F^{-1}(y):=\{x\in X\ | \ y\in F(x)\},\ \mbox{whenever}\ y\in Y.$$

For a set $A$ in a metric space $X$, we denote by $\delta_A : X\to\mathbb{R}\cup\{+\infty\}$ its indicator function
$$\delta_A(x) = \begin{cases}
0, &\quad\text{if } x\in A\\
+\infty, &\quad\text{otherwise.} \\ 
\end{cases}$$ 

Following \cite{GMT1980} we introduce two types of slopes.
\begin{definition}
	Consider a metric space $X$, a function $f : X \to \mathbb{R}\cup \{\pm\infty\}$ and a point $\bar x \in X$ such that
	$f(\bar x)$ is finite. The slope of $f$ at $\bar x$ is
	$$|\nabla f|(\bar x) := \limsup_{x\to \bar x}\frac{\max\{f(\bar x)-f(x), 0\}}{d(\bar x, x)} \, .$$
	\iffalse 
	The limiting slope is
	$$\overline{|\nabla f|}(\bar x) := \liminf_{\substack{x\to \bar x\\f(x)\to f(\bar x)}}|\nabla f|(x) \, .$$
	\fi
	The nonlocal slope is 
	$$|\nabla f|^{\diamond}(\bar x) := \sup_{x\neq \bar x}\frac{\max\{f(\bar x)-f(x), 0\}}{d(\bar x, x)} \, .$$
\end{definition}

\begin{definition}\label{coupling}
For subsets $A$ and $B$ of the metric space $X$, the so-called ``coupling function'' $\phi:X\times X\to \mathbb{R}\cup\{+\infty\}$
is  defined as
$$
    \phi(x,y)=\delta_A(x)+d(x,y)+\delta_B(y) \, .
$$
\end{definition}

The above definition has been introduced in \cite{DIL2014}.

We endow the Cartesian product $X\times Y$ of the metric spaces $(X,d_X)$ and $(Y,d_Y)$, with the metric 
$d((x_1,y_1),(x_2,y_2))=d_X(x_1,x_2)+d_Y(y_1,y_2)$ for the sake of simplicity. The particular choice of the metric is relevant only to the constants involved. 
However, our goal in this paper is to derive qualitative results, so that we are not concerned with the constants. 

 We remind the already classical definitions.

\begin{definition}
	Let $X$ and $Y$ be metric spaces, $F:X\rightrightarrows Y$ and
	$(\bar x,\bar y)\in \gr F$. We say that $F$ is (metrically) regular at $(\bar x,\bar y)$ if there exist $K>0$ and $\delta>0$ such that for all $x\in {\mathbf{B}}_\delta(\bar x)$ and all $y\in {\mathbf{B}}_\delta(\bar y)$ the following inequality holds:
	$$ d(x,F^{-1}(y))\le Kd(y,F(x)).$$

\end{definition}

\begin{definition}
	Let $X$ and $Y$ be metric spaces, $F:X\rightrightarrows Y$ and
	$(\bar x,\bar y)\in \gr F$. We say that $F$ is (metrically) subregular at $(\bar x,\bar y)$ if there exist $K>0$ and $\delta>0$ such that for all $x\in {\mathbf{B}}_\delta(\bar x)$ the following inequality holds:
	$$ d(x,F^{-1}(\bar y))\le Kd(\bar y,F(x)).$$
	
\end{definition}
Assume that $A$ and $B$ are subsets of the normed space $X$. Consider the function $H_{A,B}:X\times X\to X$ defined as

\begin{equation}\label{hmap}
H_{A,B}(x_1,x_2)=\begin{cases}
\{x_1-x_2\}, & x_1\in A,\ x_2\in B \\
\varnothing , & else 
\end{cases}
\end{equation}

\begin{definition}\label{def_tran}
	Let $X$ be a Banach space, and $A,\ B$ be closed subsets of $X$. Let $\bar x\in A\cap B$. Then $A$ and $B$ are called  transversal at $\bar x$ if $H_{A,B}$ is regular at $((\bar x,\bar x),\mathbf{0})$. 
	
\end{definition}

\begin{definition}\label{def_subtr}
	Let $X$ be a Banach space, and $A,\ B$ be closed subsets of $X$. Let $\bar x\in A\cap B$. Then $A$ and $B$ are called  subtransversal at $\bar x$ if $H_{A,B}$ is subregular at $((\bar x,\bar x),\mathbf{0})$. 
	
\end{definition}

A characterization of transversality derived in \cite{Ioffe2000} (cf. \cite{Kruger2006})
is 
\begin{proposition}\label{tra}
	Let $A$ and $B$ be closed subsets of the normed space $X$.
	$A$ and $B$ are transversal at $\bar x \in A \cap B$, if and only if there exists    $K>0$ and $\delta>0$ such that
	$$d(x, (A-a)\cap (B-b)) \leq K(d(x, A-a) + d(x, B-b)) $$
	for all $x\in \ball{\delta}{\bar x}$ and $a,b\in\ball{\delta}{\mathbf{0}}$.
\end{proposition}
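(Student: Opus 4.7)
The plan is to unpack the definition of transversality and observe that the ``perturbation-by-$(a,b)$'' language of the stated inequality and the ``preimage-of-$y$'' language of regularity of $H_{A,B}$ encode exactly the same content, up to a change of variables. Writing regularity of $H_{A,B}$ at $((\bar x,\bar x),\mathbf{0})$ with the product metric, the statement becomes: there exist $K,\delta>0$ such that for every $x_1 \in A\cap\ball{\delta}{\bar x}$, $x_2\in B\cap\ball{\delta}{\bar x}$ and every $y\in\ball{\delta}{\mathbf{0}}$,
\[
\inf\bigl\{\|x_1-a_1\|+\|x_2-a_2\|: a_1\in A,\ a_2\in B,\ a_1-a_2=y\bigr\}\le K\|y-(x_1-x_2)\|.
\]
The right-hand side is $d(y,H_{A,B}(x_1,x_2))$, since $H_{A,B}(x_1,x_2)=\{x_1-x_2\}$, and if $x_1\notin A$ or $x_2\notin B$ the inequality is vacuous. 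Once this form is in hand, each implication is a short, direct calculation.

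For the ``only if'' direction, I would fix $x\in\ball{\delta'}{\bar x}$ and $a,b\in\ball{\delta'}{\mathbf{0}}$, and for a small $\varepsilon>0$ choose near-projections $x_1\in A$ of $x+a$ and $x_2\in B$ of $x+b$, so that $\|x_1-(x+a)\|\le d(x,A-a)+\varepsilon$ and $\|x_2-(x+b)\|\le d(x,B-b)+\varepsilon$. Since $\bar x\in A\cap B$, the distances $d(x,A-a)$ and $d(x,B-b)$ are small, so, after shrinking $\delta'$, the points $x_1,x_2$ lie in $\ball{\delta}{\bar x}$. Applying the regularity estimate above with $y:=a-b$ and using
\[
\|y-(x_1-x_2)\|=\|(x_2-b)-(x_1-a)\|\le d(x,A-a)+d(x,B-b)+2\varepsilon,
\]
we obtain $(a_1,a_2)\in A\times B$ with $a_1-a_2=a-b$ and $\|x_1-a_1\|+\|x_2-a_2\|$ controlled by $K\bigl(d(x,A-a)+d(x,B-b)\bigr)$ (up to $\varepsilon$). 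The point $z:=a_1-a=a_2-b$ then lies in $(A-a)\cap(B-b)$, and estimating $\|x-z\|\le\|x+a-x_1\|+\|x_1-a_1\|$ yields the desired inequality with constant roughly $K+1$, after letting $\varepsilon\to 0$.

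For the ``if'' direction, given $x_1\in A$, $x_2\in B$ near $\bar x$ and $y$ near $\mathbf{0}$, I would plug $x:=x_1$, $a:=\mathbf{0}$, $b:=-y$ into the assumed inequality. Then $A-a=A$ contains $x_1$, so $d(x_1,A-a)=0$, while $d(x_1,B-b)=d(x_1+y,B)\le\|x_1+y-x_2\|=\|y-(x_1-x_2)\|$ since $x_2\in B$. Picking, for small $\varepsilon>0$, a point $z\in A\cap(B+y)$ with $\|x_1-z\|\le K\|y-(x_1-x_2)\|+\varepsilon$ and setting $a_1:=z\in A$, $a_2:=z-y\in B$, we get $a_1-a_2=y$ and $\|x_2-a_2\|\le\|x_2-x_1\|+\|x_1-z\|+\|y\|$, which after a routine triangle estimate gives $\|x_1-a_1\|+\|x_2-a_2\|\le(2K+1)\|y-(x_1-x_2)\|$; this is regularity of $H_{A,B}$ with a slightly larger constant.

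The argument is mostly bookkeeping: no hard analysis, no duality, no variational principle. The only real point of attention is matching neighborhoods, making sure that the near-projections $x_1,x_2$ in the first direction stay inside $\ball{\delta}{\bar x}$ (which forces a preliminary shrinking of $\delta'$ depending on $\delta$), and controlling the accumulated $\varepsilon$'s before passing to the limit. The conceptual move that makes everything transparent is recognizing that the pair $(a,b)$ in Proposition \ref{tra} corresponds, via $y=a-b$ and translation of $x$, to the shift variable $y$ in the regularity of $H_{A,B}$.
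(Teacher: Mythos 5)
Your argument is correct, but there is nothing in the paper to compare it against: Proposition \ref{tra} is stated there as a known characterization imported from the literature (the paper cites \cite{Ioffe2000}, cf.\ \cite{Kruger2006}) and no proof is given, so your proposal supplies a self-contained verification that the paper omits. The key move --- rewriting regularity of $H_{A,B}$ at $((\bar x,\bar x),\mathbf{0})$ as the estimate $\inf\{\|x_1-a_1\|+\|x_2-a_2\| : a_1\in A,\ a_2\in B,\ a_1-a_2=y\}\le K\|y-(x_1-x_2)\|$ for $x_1\in A$, $x_2\in B$ near $\bar x$ and $y$ near $\mathbf{0}$, the inequality being vacuous when $x_1\notin A$ or $x_2\notin B$ since the distance to the empty set is $+\infty$ --- is exactly the right dictionary, and the two changes of variables ($y:=a-b$ with $z:=a_1-a=a_2-b$ in one direction; $(a,b):=(\mathbf{0},-y)$ with $a_1:=z$, $a_2:=z-y$ in the other) carry it through; your constants $K+1$ and $2K+1$ and the neighborhood bookkeeping check out, and completeness is never used, consistent with the proposition being stated for a general normed space. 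One line should be repaired: in the ``if'' direction the intermediate estimate $\|x_2-a_2\|\le\|x_2-x_1\|+\|x_1-z\|+\|y\|$ is too weak, since $\|x_2-x_1\|+\|y\|$ is not controlled by any multiple of $\|y-(x_1-x_2)\|$ (take $y$ close to $x_1-x_2$); the correct grouping is $\|x_2-a_2\|=\|(x_2+y)-z\|\le\|(x_2+y)-x_1\|+\|x_1-z\|=\|y-(x_1-x_2)\|+\|x_1-z\|$, which is evidently what you actually used, as it is precisely what yields the constant $2K+1$ you state; so this is a cosmetic slip, not a gap. Your observation that only one translation is needed ($a=\mathbf{0}$) in the ``if'' direction also matches the paper's remark, immediately following the proposition, that one may take $a=0$ and only vary $b$.
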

One observes that only one of the sets could be translated, i.e. we may take $a=0$ and only vary $b$.

When $a$ and $b$ are fixed to be $\mathbf{0}$ in the last definition, a similar characterization of subtransversality is obtained (cf. \cite{Ioffe2000}): 
\begin{proposition}\label{subtrineq}
	Let $A$ and $B$ be closed subsets of the complete metric space $X$.
	$A$ and $B$ are subtransversal at $\bar x \in A \cap B$, if and only if there exists    $K>0$ and $\delta>0$ such that
	$$d(x, A\cap B) \leq K(d(x, A) + d(x, B)) $$
	for all $x\in \mathbf{B}_{\delta}(\bar x)$.
\end{proposition}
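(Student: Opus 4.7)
The plan is to unfold both definitions and go back and forth using the triangle inequality; no clever device is needed, only careful bookkeeping of the radii so that approximate minimizers stay in the prescribed neighborhoods.

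First I would rewrite the subregularity of $H_{A,B}$ at $((\bar x,\bar x),\mathbf{0})$ in primal terms. By inspection, $H_{A,B}^{-1}(\mathbf{0})=\{(z,z):z\in A\cap B\}$, and for $(x_1,x_2)\in A\times B$ one has $d(\mathbf{0},H_{A,B}(x_1,x_2))=\|x_1-x_2\|$ (the distance being $+\infty$ outside $A\times B$). With the product metric fixed in the preliminaries, subregularity therefore amounts to the existence of $K,\delta>0$ such that
\begin{equation*}
\inf_{z\in A\cap B}\bigl(\|x_1-z\|+\|x_2-z\|\bigr)\le K\|x_1-x_2\|
\end{equation*}
for every $(x_1,x_2)\in A\times B$ with $\|x_1-\bar x\|+\|x_2-\bar x\|<\delta$.

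For the implication ``subtransversality $\Rightarrow$ inequality'', I take $x\in\mathbf{B}_{\delta/5}(\bar x)$ and, for arbitrary $\varepsilon>0$, pick $a\in A$, $b\in B$ with $\|x-a\|<d(x,A)+\varepsilon$ and $\|x-b\|<d(x,B)+\varepsilon$. Since $\bar x\in A\cap B$ we have $d(x,A),d(x,B)\le\|x-\bar x\|<\delta/5$, so for $\varepsilon$ small the pair $(a,b)$ lies in the $\delta$-ball around $(\bar x,\bar x)$ in the product metric. Applying the reformulated subregularity yields $z\in A\cap B$ with $\|a-z\|+\|b-z\|\le K\|a-b\|+\varepsilon$, and then
\begin{equation*}
d(x,A\cap B)\le\|x-a\|+\|a-z\|\le d(x,A)+\varepsilon+K\|a-b\|+\varepsilon,
\end{equation*}
where $\|a-b\|\le d(x,A)+d(x,B)+2\varepsilon$. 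Letting $\varepsilon\downarrow 0$ gives the desired inequality with constant $K+1$ on a ball of radius $\delta/5$.

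For the converse, suppose $d(x,A\cap B)\le K(d(x,A)+d(x,B))$ for $x\in\mathbf{B}_\delta(\bar x)$. Given $(x_1,x_2)\in A\times B$ with $\|x_1-\bar x\|+\|x_2-\bar x\|<\delta$, I apply the inequality at the point $x=x_1$, using $d(x_1,A)=0$ and $d(x_1,B)\le\|x_1-x_2\|$, to obtain $d(x_1,A\cap B)\le K\|x_1-x_2\|$. Picking $z\in A\cap B$ nearly attaining this distance, the triangle inequality gives $\|x_1-z\|+\|x_2-z\|\le 2\|x_1-z\|+\|x_1-x_2\|\le(2K+1)\|x_1-x_2\|$ up to an arbitrarily small error, which is exactly the reformulated subregularity with constant $2K+1$. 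I do not expect any real obstacle: the only thing to watch is the shrinkage of radii when passing between the product neighborhood of $(\bar x,\bar x)$ and the neighborhood of $\bar x$, which is handled by the factor $1/5$ (respectively $1$) chosen above.
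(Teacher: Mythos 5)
Your proof is correct, but there is nothing in the paper to compare it against: Proposition \ref{subtrineq} is stated without proof, quoted as a known result from \cite{Ioffe2000} (cf. \cite{Kruger2006}), in parallel with the transversality characterization of Proposition \ref{tra}. Your argument is the natural direct verification, and it checks out in every detail: the identification $H_{A,B}^{-1}(\mathbf{0})=\{(z,z):z\in A\cap B\}$, the formula $d(\mathbf{0},H_{A,B}(x_1,x_2))=\|x_1-x_2\|$ on $A\times B$ (with value $+\infty$ outside, by the same convention the paper uses later in Proposition \ref{ttttra}), the radius bookkeeping (with $x\in\mathbf{B}_{\delta/5}(\bar x)$ and $\varepsilon<\delta/10$ the pair $(a,b)$ indeed lies in the product $\delta$-ball, giving the inequality with constant $K+1$), and the converse, where testing the inequality only at the point $x=x_1\in A$ already yields subregularity with constant $2K+1$. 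Two side remarks. First, your proof never uses completeness of $X$, and rightly so: this equivalence is pure triangle-inequality bookkeeping, and the completeness hypothesis in the statement is only inherited from the surrounding context (it is genuinely needed for Lemma \ref{ttth} and Theorem \ref{tansub}, not here). Second, since $H_{A,B}$ involves differences, subtransversality in the sense of Definition \ref{def_subtr} is only defined when $X$ is a normed (Banach) space, so your use of norms rather than a bare metric is the correct reading of the statement; this slight mismatch is the paper's, not yours, and is exactly what its remark after the proposition is about -- the left-hand side of the equivalence needs the linear structure, while the right-hand characterization is purely metric.
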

Thus we observe that $A$ and $B$ are transversal at $\bar x \in A\cap B$ if and only if the subtransversality inequality holds for 
$A-a$ and $B-b$ with constant $K$ for all $x\in\ball{\delta}{\bar x}$ and $a,b\in\ball{\delta}{\mathbf{0}}$.

It is worth noting that while the definitions of transversality and subtransversality clearly make use of the linear structure, the
characterization of subtransversality, given by Proposition \ref{subtrineq}, is purely metric. Thus, one may think of subtransversality as of metric concept. However, 
all chracterizations of transversality use the linear structure. 

\section{Primal space characterizations of subtransversality}

In this section we obtain primal space characterizations of subtransversality. In the papers \cite{nese} and \cite{suff} (see Remark 3.5 in \cite{nese}) similar conditions are presented. It is proved that these conditions are characterizations (both necessary and sufficient) only in the convex case.

Our approach is to some extend motivated by the considerations in the paper \cite{BKRtt}. In it, the notion of tangential transversality is 
introduced as a sufficient condition for nonseparation of sets, tangential intersection properties and a Lagrange multiplier rule. The corresponding definition follows.

\begin{definition}\label{tangtr}
	Let $A$ and $B$ be closed subsets of the metric space $X$.
	We say that $A$ and $B$ are tangentially transversal at $\bar x \in A \cap B$, if there exist $M>0$, $\delta >0$ and  $\eta >0$ such that for any
	two different points $x^A \in   {\overline{\mathbf{B}}}_\delta(\bar x)\cap A$ and $x^B \in   {\overline{\mathbf{B}}}_\delta(\bar x)\cap B$, there exist sequences $t_m\searrow 0$,  $\{x_m^A\}_{m\ge 1} \subset A$ and $\{x_m^B\}_{m\ge 1} \subset B$ such that for all $m$ $$d(x_m^A, x^A)\le t_mM,\ \ d(x_m^B, x^B)\le t_mM,\ \ d(x_m^A,x_m^B) \le d(x^A,x^B) - t_m \eta\, .$$
\end{definition}
Clearly, the three constants $M,\delta, \eta$ are redundant. More specifically, one can choose $M=1$, which changes $\eta$, or choose $\eta=1$ and change $M.$

The above definition can be reformulated equivalently.

\begin{proposition}\label{tangtr_eq}
	Let $A$ and $B$ be closed subsets of the metric space $X$.
	$A$ and $B$ are tangentially transversal at $\bar x \in A \cap B$, if and only if there exist $\delta >0$ and  $\zeta >0$ such that for any
	two different points $x^A \in   {\overline{\mathbf{B}}}_\delta(\bar x)\cap A$ and $x^B \in
	{\overline{\mathbf{B}}}_\delta(\bar x)\cap B$, there exist sequences   $\{x_m^A\}_{m\ge 1} \subset A$ and 
	$\{x_m^B\}_{m\ge 1} \subset B$  converging to $x^A$ and $x^B$ repsectively, and such that for all $m$ 
	$$ d(x_m^A,x_m^B) \le d(x^A,x^B) - \zeta\max\{d(x_m^A, x^A), d(x_m^B, x^B)\}$$
	and $\max\{d(x_m^A, x^A), d(x_m^B, x^B)\}>0.$
\end{proposition}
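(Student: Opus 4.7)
The plan is to show the two formulations are equivalent by identifying the abstract parameter $t_m$ of Definition \ref{tangtr} with the concrete displacement $\max\{d(x_m^A,x^A),d(x_m^B,x^B)\}$ that appears in Proposition \ref{tangtr_eq}. Once this dictionary is set up, each direction becomes a change of variables.

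For the direction ($\Rightarrow$), I would take the constants $M,\delta,\eta$ and the sequences $\{x_m^A\}\subset A$, $\{x_m^B\}\subset B$, $t_m\searrow 0$ provided by Definition \ref{tangtr}. The bounds $d(x_m^A,x^A)\le t_mM$ and $d(x_m^B,x^B)\le t_mM$ immediately give convergence $x_m^A\to x^A$ and $x_m^B\to x^B$, and also $\max\{d(x_m^A,x^A),d(x_m^B,x^B)\}\le t_mM$. Plugging this into the third inequality of Definition \ref{tangtr} yields
$$d(x_m^A,x_m^B)\le d(x^A,x^B)-t_m\eta \le d(x^A,x^B)-\tfrac{\eta}{M}\max\{d(x_m^A,x^A),d(x_m^B,x^B)\},$$
so the proposition holds with the same $\delta$ and $\zeta:=\eta/M$. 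Positivity of the max comes for free: otherwise the last displayed inequality would force $0\le -t_m\eta<0$.

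For the direction ($\Leftarrow$), I would take $\delta,\zeta$ from the proposition and the given convergent sequences $\{x_m^A\}\to x^A$, $\{x_m^B\}\to x^B$, and then \emph{define} $t_m:=\max\{d(x_m^A,x^A),d(x_m^B,x^B)\}$. By hypothesis $t_m>0$, and the convergence of both sequences forces $t_m\to 0$; if strict monotonicity is demanded by the notation $t_m\searrow 0$, I would pass to a subsequence. The choice $M:=1$ and $\eta:=\zeta$ then makes Definition \ref{tangtr} hold trivially: the two distance bounds $d(x_m^A,x^A)\le t_m$ and $d(x_m^B,x^B)\le t_m$ are immediate from the definition of $t_m$, while the third inequality is precisely the hypothesis of the proposition rewritten in terms of $t_m$.

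The only delicate point I anticipate is the strict-decrease convention for $t_m\searrow 0$ in the reverse direction, which is handled by a routine subsequence extraction; the rest is a bookkeeping exercise with the one-line identification $t_m\leftrightarrow\max\{d(x_m^A,x^A),d(x_m^B,x^B)\}$.
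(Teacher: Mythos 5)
Your proof is correct and is essentially the argument the paper intends: the paper states this proposition without proof as an evident reformulation, and your dictionary $t_m\leftrightarrow\max\{d(x_m^A,x^A),d(x_m^B,x^B)\}$ (with $\zeta=\eta/M$ one way and $M=1$, $\eta=\zeta$ the other way, plus a subsequence extraction to enforce $t_m\searrow 0$) is exactly the routine bookkeeping involved. The same identification, with the sum of displacements in place of the maximum, is what underlies the paper's slope characterization in Proposition \ref{prop_couptt}, so your approach is fully aligned with the source.
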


%We begin by introducing (compare with Definition \ref{tangtr})
Now we introduce a weaker notion. Note that the main difference is that ``there exists a sequence $\{ t_n\}_{n=1}^\infty$  of positive reals tending to zero such that for every $t_n$ belonging to it $\dots$" is replaced by 
``there exists a  positive real $\theta$  such that $\dots$". This is indeed a significant difference, as it will be shown later on. The other weakening in the definition, ``$\bar x \in A \cap B$'' to ``$A\cap \ball{\frac{\delta}{2(1+2M)}}{\bar x}\ne \varnothing $, 
	$B\cap \ball{\frac{\delta}{2(1+2M)}}{\bar x}\ne \varnothing $'', is for purely technical reasons.
\begin{definition}\label{TTrevisited}
	Let $A$ and $B$ be closed subsets of the metric space $X$
	and $\bar x \in X$. We say that $A$ and $B$ have \textit{property} $(\mathcal{T})$ at $\bar x$ if there exist
	$\delta > 0$ and $M > 0$ such that $A\cap \ball{\frac{\delta}{2(1+2M)}}{\bar x}\ne \varnothing $, 
	$B\cap \ball{\frac{\delta}{2(1+2M)}}{\bar x}\ne \varnothing $ and for any $x^A \in A \cap   \mathbf{\bar B}_\delta
	(\bar x)$ and $x^B \in B \cap    \mathbf{\bar B}_\delta
	(\bar x)$ with $x^A\not =x^B$ there exist $\theta  > 0$, $\hat x^A \in A$ and  $\hat x^B \in B$ such that
	$$d(x^A, \hat x^A)\le \theta M \ , \ \
	d(x^B, \hat x^B)\le \theta M \ \mbox{ and } \
	d(\hat x^A, \hat x^B)\le d(x^A,  x^B)  - \theta  \ .$$
	Equivalently, $A$ and $B$ have $(\mathcal{T})$ at $\bar x$ if and only if there exist
	$\delta > 0$ and $M > 0$ such that $A\cap \ball{\frac{\delta}{2(1+2M)}}{\bar x}\ne \varnothing $ , 
	$B\cap \ball{\frac{\delta}{2(1+2M)}}{\bar x}\ne \varnothing $
	and for any $x^A \in A \cap   \mathbf{\bar B}_\delta
	(\bar x)$ and $x^B \in B \cap    \mathbf{\bar B}_\delta
	(\bar x)$ with $x^A\not =x^B$ there exist $\hat x^A \in A$ and  $\hat x^B \in B$ such that
	$$d(\hat x^A, \hat x^B)\le d(x^A,  x^B)  - \frac{1}{M} \max\{d(x^A, \hat x^A), d(x^B, \hat x^B) \}  $$
	and $\max\{d(x^A, \hat x^A), d(x^B, \hat x^B) \}>0.$
\end{definition}

Note that in this definition we do not require the point $\bar x$ to be in the intersection of $A$ and $B$, 
only to be sufficiently close to both $A$ and $B$.

The lemma below is the main technical result, whose direct corollaries will justify the benefits of the above definition. Its proof may seem long, because it essentially contains the proof of the Ekeland variational principle. We prefer to prove it using transfinite induction, instead of some of the decreasing principles in the literature, in order to emphasize the usefulness of the method and its geometrical intuition.  

\begin{lemma}\label{ttth}
    Let $A$ and $B$ be closed subsets of the complete metric space $(X,d)$ and $\bar x\in X$. Let  $A$ and $B$ have \textit{property $(\mathcal{T})$} at $\bar x$ with constants $\delta$ and $M$.  Let  $x^A\in A$ with $\displaystyle d(x^A,\bar x)\le \frac{\delta}{1+2M}$ and $ x^B\in B$ with $\displaystyle d(x^B,\bar x)\le \frac{\delta}{1+2M}$.    Then, there exists $x^{AB} \in A \cap B$ with
    $$d(x^{AB}, x^A) \le  {M}  d(x^A,x^B) \mbox{ and } d(x^{AB}, x^B) \le  {M}  d(x^A,x^B) \, .$$
\end{lemma}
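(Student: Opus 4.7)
The plan is to execute a transfinite recursion in the product space $A\times B$ that realizes property $(\mathcal{T})$ as an Ekeland-style descent. Equip $A\times B$ with the max metric $d_\infty((y,z),(y',z')):=\max\{d(y,y'),d(z,z')\}$ and consider the Lipschitz function $f(y,z):=M\,d(y,z)$. Rewriting the second form of the defining inequality in Definition \ref{TTrevisited}, property $(\mathcal{T})$ says exactly that for every $p=(y,z)\in(A\cap\overline{\mathbf B}_\delta(\bar x))\times(B\cap\overline{\mathbf B}_\delta(\bar x))$ with $f(p)>0$ there is some $p'\in A\times B$, $p'\neq p$, with
$$d_\infty(p,p')\le f(p)-f(p').$$

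I would then construct a transfinite sequence $\{p_\alpha=(x_\alpha^A,x_\alpha^B)\}$, starting from $p_0=(x^A,x^B)$, while maintaining two invariants: (i) $x_\alpha^A$ and $x_\alpha^B$ lie in $\overline{\mathbf B}_\delta(\bar x)$, so that property $(\mathcal{T})$ may be applied; and (ii) $d_\infty(p_\beta,p_\alpha)\le f(p_\beta)-f(p_\alpha)$ for every $\beta\le\alpha$. At a successor ordinal, if $f(p_\alpha)>0$ I invoke property $(\mathcal{T})$ to produce $p_{\alpha+1}$; the new displayed inequality combined with invariant (ii) at $\alpha$ extends (ii) to $\alpha+1$ by the triangle inequality. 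At a limit ordinal $\lambda$, the net $\{f(p_\beta)\}_{\beta<\lambda}$ is monotone in $[0,\infty)$, so (ii) forces $\{p_\beta\}_{\beta<\lambda}$ to be Cauchy in $(X\times X,d_\infty)$; completeness of $X$ and closedness of $A$ and $B$ give a limit $p_\lambda\in A\times B$, and continuity of $d$ propagates both invariants across the limit.

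For invariant (i), I take $\beta=0$ in (ii) to get
$$d(x_\alpha^A,x^A)\le f(p_0)-f(p_\alpha)\le M\,d(x^A,x^B)\le \frac{2M\delta}{1+2M},$$
using $d(x^A,x^B)\le d(x^A,\bar x)+d(\bar x,x^B)\le 2\delta/(1+2M)$; then $d(x_\alpha^A,\bar x)\le\frac{2M\delta}{1+2M}+\frac{\delta}{1+2M}=\delta$, and symmetrically for $x_\alpha^B$. This is precisely where the choice of the initial constant $\delta/(1+2M)$ pays off.

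Termination is forced by the strict inequality $f(p_{\alpha+1})<f(p_\alpha)$ (property $(\mathcal{T})$ supplies $p_{\alpha+1}\neq p_\alpha$, and (ii) with $\beta=\alpha$ then gives strict descent): a strictly decreasing transfinite sequence in $\mathbb R$ must stabilize at a countable ordinal, so the recursion reaches some $\gamma$ with $f(p_\gamma)=0$, i.e.\ $x_\gamma^A=x_\gamma^B$. Setting $x^{AB}:=x_\gamma^A\in A\cap B$, invariant (ii) with $\beta=0$ yields $d(x^{AB},x^A),\,d(x^{AB},x^B)\le f(p_0)=M\,d(x^A,x^B)$. The subtle step is the limit-ordinal passage: one must verify that the invariants are genuinely preserved in the limit (so that the next successor step stays inside $\overline{\mathbf B}_\delta(\bar x)$), which is the transfinite analogue of the completeness argument in Ekeland's principle and is what replaces the need to exhibit an explicit auxiliary function.
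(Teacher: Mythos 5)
Your proposal is correct and follows essentially the same route as the paper: a transfinite recursion that applies property $(\mathcal{T})$ at successor ordinals, uses completeness and closedness at limit ordinals, and terminates before $\omega_1$ via the disjoint-intervals (countability) argument, with the initial radius $\delta/(1+2M)$ keeping all iterates inside $\overline{\mathbf{B}}_\delta(\bar x)$. The only difference is cosmetic bookkeeping: your single Ekeland-style invariant $d_\infty(p_\beta,p_\alpha)\le f(p_\beta)-f(p_\alpha)$ with $f(y,z)=M\,d(y,z)$ merges the paper's invariants (S1) and (S3) and replaces its explicit step-accumulation parameter $t_\alpha$ by the value of $f$ itself.
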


\begin{proof}

   If the points $x^A$ and $x^B$ coincide, the assertion of the theorem is trivial. If $d(x^A,x^B) >0$, we are going to construct inductively three transfinite sequences
    indexed by ordinal numbers (cf., for example,  $\S$ 2 Ordinal
    numbers of Chapter 1 in \cite{Jech}). More precisely, we prove that
    there exist an ordinal number $\alpha_0$ and transfinite sequences
    $\{x_\alpha^A\}_{1 \le \alpha \le \alpha_0}\subset X$,
    $\{x_\alpha^B\}_{1 \le \alpha \le \alpha_0}\subset X$,
    $\{t_\alpha\}_{1 \le \alpha \le \alpha_0}\subset [0, +\infty)$,
    such that $x_{\alpha_0}^A =x_{\alpha_0}^B$ and  for each $\alpha \in [1, \alpha_0 ]$
    we have that the
    following properties hold true:

    \begin{enumerate} 
    \item [(S0)] $\{x_\alpha^A\}\in \ball{\delta}{\bar x}\cap A$ and $\{x_\alpha^B\}\in \ball{\delta}{\bar x}\cap B$;\\
    \item [(S1)]  $d(x_\alpha^A , x_\alpha^B) \le d(x^A , x^B)  - t_\alpha$ (and hence $t_\alpha$  is bounded by $d(x^A , x^B)$);\\
        \item [(S2)]  $ d(x_\alpha^A  , \bar x) \le d(x^A , \bar x)+t_\alpha M$ and $ d(x_\alpha^B , \bar x) \le d(x^B  ,  \bar x)+t_\alpha M$;\\
        \item [(S3)]  $d(x_\alpha^A, x_\gamma^A) \le M \left(t_\alpha - t_\gamma\right)$ and $d(x_\alpha^B,x_\gamma^B) \le M \left(t_\alpha - t_\gamma\right)$  for each $\gamma\le \alpha$.\\
    \end{enumerate}

    We implement our construction using induction on $\alpha$. The process terminates when $x_\alpha^A=x_\alpha^B$ for some $\alpha$, and this $\alpha$ is named $\alpha_0$.
    We start with  $x_1^A := x^A\in  \mathbf{\bar B}_\delta (\bar x)\cap A$, $x_1^B :=x^B\in \mathbf{\bar B}_\delta (\bar x)\cap B$ and $t_1= 0$.
    It is straightforward to verify     the inductive
    assumptions (S1)-(S3)   for $\alpha = 1$.

    Assume that $x_\beta^A\in \mathbf{\bar B}_\delta (\bar x)\cap A$, $x_\beta^B\in \mathbf{\bar B}_\delta (\bar x)\cap B$ and
    $t_\beta$ are constructed and (S1)-(S3) are true for all ordinals $\beta$  less than $\alpha$ and the process has not been terminated.

    Let us first consider the case when $\alpha$  is a successor ordinal, i.e. $\alpha = \beta+ 1$. As $\beta <\alpha_0$
    (the process has not been terminated), we have $d(x_\beta^A ,  x_\beta^B )\ne 0$. Moreover $(S0)$ holds, so we can apply property $(\mathcal{T})$ to obtain $\theta>0,\hat x_\beta^A,\hat x_\beta^B$, and we define $t_\alpha:=t_\beta+\theta$, $x_\alpha^A:=\hat x_\beta^A$ and $x_\alpha^B:=\hat x_\beta^B$. 
    Now we have $x_\alpha^A \in A$, $x_\alpha^B\in B$, $d(x_\alpha^A, x_\beta^A)\le M \theta$,  $d(x_\alpha^B, x_\beta^B)\le M \theta$
    and $d(x_\alpha^A ,x_\alpha^B )\le d(x_\beta^A,x_\beta^B) - \theta$.
    Using the inductive assumption, we have
    $$d(x_\alpha^A ,x_\alpha^B )\le d(x_\beta^A,x_\beta^B) -\theta \le d(x^A,x^B) - t_\beta - \theta= d(x^A,x^B) - t_\alpha \ .$$
    Therefore, (S1) is verified for $\alpha$.

    Now the inequalities $d(x_\alpha^A, x_\beta^A)\le M \theta$,  $d(x_\alpha^B, x_\beta^B)\le M \theta$ and the inductive assumption (S2) for $\beta$ yield
$$d(x_\alpha^A , \bar x) \le   d(x_\beta^A , \bar x) +d(x_\alpha^A, x_\beta^A) \le d(x^A , \bar x)+t_\beta M + M \theta =  d(x^A , \bar x)+t_\alpha M ,$$
$$d(x_\alpha^B , \bar x) \le   d(x_\beta^B , \bar x) +d(x_\alpha^A, x_\beta^B) \le d(x^B , \bar x)+t_\beta M + M \theta =  d(x^B , \bar x)+t_\alpha M .$$

    Thus (S2) is verified for $\alpha$. Using the estimate $t_\beta \le d(x^A , x^B)$ from (S1) for $\alpha$, the assumption of the theorem  and the above inequalities, we obtain
   $$d(x_\alpha^A , \bar x) \le  d(x^A , \bar x)+t_\alpha M \le d(x^A,\bar x)  +  {M} d(x^A,x^B)\le$$
   $$d(x^A,\bar x)+M(d(x^A,\bar x)+d(x^B,\bar x))\le\frac{\delta}{1+2M}+M\frac{2\delta}{1+2M}=\delta $$
   which means that $x_\alpha^A\in \mathbf{\bar B}_\delta (\bar x)$. Similarly $x_\alpha^B\in \mathbf{\bar B}_\delta (\bar x)$. Thus (S0) holds.

    Now  let $\gamma<\alpha$. Then
$$
    d(x_\alpha^A , x_\gamma^A) \le d(x_\beta^A,x_\gamma^A)+ d(x_\alpha^A, x_\beta^A) \le 
    M \left(t_\beta - t_\gamma\right) + M \theta = M \left(t_\alpha - t_\gamma\right)
$$
    and in the same way
    $$
    d(x_\alpha^B , x_\gamma^B) \le d(x_\beta^B,x_\gamma^B)+ d(x_\alpha^B, x_\beta^B) \le
    M \left(t_\beta - t_\gamma\right) + M \theta = M \left(t_\alpha - t_\gamma\right) \ .
$$

    We have verified the inductive assumptions (S0)-(S3) for the case of a successor ordinal $\alpha$.

    We next consider the case when $\alpha$ is a limit  ordinal
    number. Let $\beta<\alpha$ be arbitrary. Then $\beta+1<\alpha$
    too. Since the transfinite process has not stopped at   $\beta+1$,
    then $d(x_\beta^A , x_\beta^B)>0$, and hence taking into account
    (S1) we obtain that
    $t_\beta < d(x_\beta^A , x_\beta^B)$.
    Hence the increasing transfinite sequence
    $\{t_\beta\}_{1\le\beta<\alpha}$ is bounded, and so it is
    convergent. We denote $t_\alpha:=\lim_{\beta \to \alpha} t_\beta$. Since $d(x_\beta^A , x_\gamma^A ) \le
    (t_\beta-t_\gamma)M$, the transfinite sequence
    $\{x_\beta^A\}_{1\le\beta<\alpha}$ is fundamental. Hence there
    exists $x_\alpha^A$ so that $\{x_\beta^A\}_{1\le\beta<\alpha}$ tends
    to $x_\alpha^A$ as $\beta$ tends to $ \alpha$ with $\beta<\alpha$. In the same way one can
    prove the existence of $x_\alpha^B$ so that the transfinite
    sequence $\{x_\beta^B\}_{1\le\beta<\alpha}$ tends to $x_\alpha^B$ as
    $\beta$ tends to $\alpha$. To verify the inductive assumptions (S1)-(S3) for
    $\alpha$, one can just take a limit for $\beta$ tending to $\alpha$ with $\beta<\alpha$ in
    the same assumptions written for each  $\beta <\alpha$. For (S0) one uses that $A$ and $B$ are closed.

    We have   constructed inductively the transfinite sequences $\{ x_\beta^A \}_{\beta \le \alpha}\subset A$, $\{ x_\beta^B \}_{\beta \le \alpha}\subset B$ and $\{ t_\beta\}_{\beta
        \le \alpha}\subset [0,+\infty)$. The process will terminate when
    $x_\alpha^A=x_\alpha^B$ for some $\alpha$. Since
    $d(x_\alpha^A , x_\alpha^B) \le d(x^A , x^B)  - t_\alpha$
    and the transfinite sequence $t_\alpha$ is strictly increasing, the  equality
    $x_\alpha^A=x_\alpha^B$ will be satisfied for some
    $\alpha=\alpha_0$ strictly preceding the first uncountable ordinal
    number. Indeed, the successor ordinals indexing the
    so constructed transfinite sequences form a countable set (because to every successor
    ordinal $\alpha+1$ corresponds the open interval $(t_\alpha, t_{\alpha+1})\subset \mathbb{R}$,
    these intervals are disjoint and the rational numbers are countably many and
    dense in $\mathbb{R}$). Therefore, $\alpha_0$ is countably accessible.
    On the other hand-side, assuming the Axiom of countable choice, $\omega_1$ is not countably accessible).
    Hence our inductive process terminates before $\omega_1$.

    Then we put $x^{AB}:=x_{\alpha_0}^A=x_{\alpha_0}^B \in A \cap B$ and because of (S1)
    we have that
    $t_{\alpha_0} \le d(x^A , x^B)$. 
    Applying (S3)  we obtain
    $d(x^{AB}, x^A) \leq M(t_{\alpha_0}-t_1) \le M d(x^A,x^B)$ hence $ \ d(x^{AB}, x^B)  \le M d(x^A,x^B) \, . $

    This completes the proof.
\end{proof}

Completeness is crucial in the above lemma. The following theorem is formulated in a way that enables us to use it to obtain primal space characterizations both for subtransversality and transversality.

\begin{theorem}\label{tansub}
	Let $A$ and $B$ be closed subsets of the complete metric space $X$ and $\bar x\in X$. If $A$ and $B$ have property $(\mathcal{T})$ at 
	$\bar x$, then there exist $K>0$ and $\delta>0$ such that
	\begin{equation}\label{ineq_subtr}
	d(x, A\cap B) \leq K(d(x, A) + d(x, B))
	\end{equation}
	for all $x\in \mathbf{B}_{\delta}(\bar x)$. 
	
	If there exist $K>0$ and $\delta>0$ such that \eqref{ineq_subtr} holds for all $x\in \mathbf{B}_{\delta}(\bar x)$, 
	$A\cap \ball{\frac{\delta}{4K+10}}{\bar x}\ne \varnothing $ and
	$B\cap \ball{\frac{\delta}{4K+10}}{\bar x}\ne \varnothing $, then $A$ and $B$ 
	have property $(\mathcal{T})$ at $\bar x$.

\end{theorem}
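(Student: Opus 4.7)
For the forward implication, the strategy is a direct application of Lemma \ref{ttth}. Given $x\in\mathbf{B}_{\delta_s}(\bar x)$ for a sufficiently small $\delta_s$ (we will take $\delta_s:=\delta_T/(8(1+2M))$, where $\delta_T$ and $M$ are the constants of property $(\mathcal{T})$), we dispose of the trivial case $x\in A\cap B$ and otherwise pick near-projections $x^A\in A$, $x^B\in B$ with $d(x,x^A)\le d(x,A)+\eta$ and $d(x,x^B)\le d(x,B)+\eta$ for a small $\eta>0$. Using the non-emptiness assumptions $A\cap\mathbf{B}_{\delta_T/(2(1+2M))}(\bar x)\ne\varnothing$ and $B\cap\mathbf{B}_{\delta_T/(2(1+2M))}(\bar x)\ne\varnothing$ from the definition of $(\mathcal{T})$, the triangle inequality forces both $x^A$ and $x^B$ into the ball $\mathbf{\bar B}_{\delta_T/(1+2M)}(\bar x)$, which is exactly the hypothesis needed to feed them into Lemma \ref{ttth}. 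The lemma returns $x^{AB}\in A\cap B$ with $d(x^{AB},x^A)\le M\,d(x^A,x^B)$, and a one-line triangle-inequality chain yields $d(x,A\cap B)\le(M+1)d(x,A)+M\,d(x,B)+(2M+1)\eta$. Letting $\eta\to 0^+$ produces \eqref{ineq_subtr} with $K:=M+1$.

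For the converse, the plan is to produce $(\mathcal{T})$ with the explicit constants $M:=K+\tfrac{3}{2}$ and $\delta_T:=\delta\cdot\tfrac{4K+8}{4K+10}$. The key numerical identity is $2(1+2M)=4K+8$, which gives $\delta_T/(2(1+2M))=\delta/(4K+10)$; so the non-emptiness requirements of $(\mathcal{T})$ at $\bar x$ are the very hypotheses of the theorem. Given $x^A\in A\cap\mathbf{\bar B}_{\delta_T}(\bar x)$ and $x^B\in B\cap\mathbf{\bar B}_{\delta_T}(\bar x)$ with $x^A\ne x^B$, the strict inequality $\delta_T<\delta$ places $x^A\in\mathbf{B}_\delta(\bar x)$, so \eqref{ineq_subtr} applied at $x^A$ reads $d(x^A,A\cap B)\le K(d(x^A,A)+d(x^A,B))=K\,d(x^A,B)\le K\,d(x^A,x^B)$. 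Extract $y\in A\cap B$ with $d(x^A,y)\le(K+\tfrac{1}{2})d(x^A,x^B)$, and set $\theta:=d(x^A,x^B)>0$ and $\hat x^A:=\hat x^B:=y$. The three defining inequalities of $(\mathcal{T})$ are then immediate: $d(\hat x^A,\hat x^B)=0=d(x^A,x^B)-\theta$, $d(x^A,\hat x^A)\le(K+\tfrac{1}{2})\theta\le M\theta$, and $d(x^B,\hat x^B)\le d(x^B,x^A)+d(x^A,y)\le(1+K+\tfrac{1}{2})\theta=M\theta$.

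The principal obstacle is the delicate balancing of constants in the converse: $M$ must be at least $K+1$ for the triangle-inequality bound on $d(x^B,y)$ to fit within the $M\theta$ budget, yet small enough that $2(1+2M)\le 4K+10$, ensuring that the given non-emptiness ball $\mathbf{B}_{\delta/(4K+10)}(\bar x)$ is at least as large as the required ball $\mathbf{B}_{\delta_T/(2(1+2M))}(\bar x)$. The admissible window for $M$ is narrow, and the symmetric choice $M=K+\tfrac{3}{2}$ provides exactly the minimal slack of $\tfrac{1}{2}$ needed both to absorb the infimum-approximation error when extracting $y$ from $A\cap B$ and to keep $\delta_T<\delta$, so that the open-ball subtransversality inequality applies to every $x^A\in\mathbf{\bar B}_{\delta_T}(\bar x)$. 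Outside this window the implication could still be true but would require separate constants, which is precisely why the statement carries the slightly awkward prefactor $4K+10$.
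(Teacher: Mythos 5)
Your proof is correct and takes essentially the same approach as the paper: the forward direction feeds near-projections $x^A,x^B$ into Lemma \ref{ttth} and runs the identical triangle-inequality chain with $K=M+1$, while the converse establishes property $(\mathcal{T})$ by taking $\hat x^A=\hat x^B$ to be a near-closest point of $A\cap B$ to $x^A$ with $\theta=d(x^A,x^B)$. The only deviation is bookkeeping: the paper uses $M=K+2$ and keeps the original $\delta$ (so that $\delta/(2(1+2M))=\delta/(4K+10)$ matches the hypothesis), whereas your choice $M=K+\tfrac{3}{2}$, $\delta_T=\delta\cdot\tfrac{4K+8}{4K+10}$ achieves the same matching and, as a bonus, avoids applying the open-ball inequality \eqref{ineq_subtr} to points on the boundary of $\mathbf{\bar B}_{\delta}(\bar x)$, a detail the paper's own proof glosses over.
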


\begin{proof} Let $A$ and $B$ have property $(\mathcal{T})$ with constants $M,\ \delta$. Let $\displaystyle \hat \delta:=\frac{\delta}{8(1+2M)}.$
	Let $x\in \ball{\hat\delta}{\bar x}$ and choose $\varepsilon\in (0,\hat \delta)$. Then there exists $x^A\in A$, such that $d(x,x^A)\le d(x,A)+\varepsilon$. 
	We have that $d(x,A)\le d(x, \bar x) + d(\bar x, A) \le \hat \delta +  \frac{\delta}{2(1+2M)} \le 5 \hat\delta$, 
	so that $d(x,x^A)\le 6\hat\delta$. Since $d(x,\bar x)\le \hat\delta$, the triangle inequality implies 
	$$d(x^A,\bar x)\le 7\hat\delta<\frac{\delta}{1+2M}.$$
	Similarly, we find $x^B\in B$, such that $d(x,x^B)\le d(x,B)+\varepsilon$ and 
	$$d(x^B,\bar x)<\frac{\delta}{1+2M}.$$
	Then $x^A$ and $x^B$ satisfy the requirements in Lemma \ref{ttth}. Hence, there is $x^{AB}\in A\cap B$, such that 
	$$d(x^{AB}, x^A) \le  M  d(x^A,x^B) \mbox{ and } d(x^{AB}, x^B) \le  M  d(x^A,x^B) \, .$$
	We estimate
	\begin{align*}
	d(x, &A\cap B)\le d(x, x^{AB})\le d(x,x^A)+d(x^A,x^{AB})\le d(x,A)+\varepsilon+Md(x^A,x^B)\\
	&\le d(x,A)+\varepsilon+ M(d(x,x^A)+d(x,x^B))\\
	&\le d(x,A)+\varepsilon+M(d(x,A)+\varepsilon+d(x,B)+\varepsilon)\\
	&\le(M+1)(d(x,A)+d(x,B))+\varepsilon(1+2M)
	\end{align*}
	Letting $\varepsilon\to 0$ proves \eqref{ineq_subtr} with constants $\hat\delta$ and $M+1$.
	
	For the second part, let \eqref{ineq_subtr} hold with constants $\delta $ and $K$. Take $x^A\in A\cap \ball{\delta}{\bar x}$ and $x^B\in B\cap \ball{\delta}{\bar x}$ and for $\varepsilon := d(x^A,x^B) >0$, find  $x^{AB}\in A\cap B$ such that 
	$$d(x^A,x^{AB})< d(x^A,A\cap B)+\varepsilon \le K d(x^A, B)+\varepsilon $$ 
	$$\le K d(x^A,x^B) +\varepsilon =(K+1) d(x^A,x^B) .$$ 
	
	Then
	$$d(x^B,x^{AB}) \le d(x^A,x^B) + d(x^A,x^{AB}) $$
	$$\le d(x^A,x^B)  + (K+1) d(x^A,x^B) =(K+2) d(x^A,x^B) .$$
	Now property $(\mathcal{T})$ follows with
	$\hat x^A = \hat x^B = x^{AB}$, $\theta = d(x^A,x^B)>0$ and $M=K+2$, because proximity of $A$ and $B$ to $\bar x$ is assumed. 
	
\end{proof}
As a corollary we obtain that property $(\mathcal{T})$ is an equivalent characterization of subtransversality in the presence of completeness.
\begin{corollary}\label{corol1}
    If $\bar x \in A\cap B$, where $A$ and $B$ are closed subsets of the complete metric space $X$, then $A$ and $B$ have property $(\mathcal{T})$ at $\bar x$
	if and only if $A$ and $B$ are subtransversal at $\bar x$. 
\end{corollary}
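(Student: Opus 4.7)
The plan is to deduce this corollary directly from Theorem \ref{tansub} combined with the metric characterization of subtransversality given in Proposition \ref{subtrineq}. Both directions are essentially immediate, with only a small verification needed to match the hypotheses of the second half of Theorem \ref{tansub}.

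For the forward direction, I would assume $A$ and $B$ have property $(\mathcal{T})$ at $\bar x$. The first half of Theorem \ref{tansub} then yields constants $K>0$ and $\delta>0$ such that the subtransversality-type inequality
$$d(x, A\cap B) \leq K(d(x, A) + d(x, B))$$
holds for every $x \in \mathbf{B}_\delta(\bar x)$. Proposition \ref{subtrineq} identifies precisely this local inequality with subtransversality of $A$ and $B$ at $\bar x$, so the conclusion follows.

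For the reverse direction, I would assume $A$ and $B$ are subtransversal at $\bar x \in A\cap B$. By Proposition \ref{subtrineq} there exist $K>0$ and $\delta>0$ such that the same inequality holds on $\mathbf{B}_\delta(\bar x)$. To apply the second part of Theorem \ref{tansub}, I must check the two auxiliary conditions $A\cap \mathbf{B}_{\delta/(4K+10)}(\bar x)\neq\varnothing$ and $B\cap \mathbf{B}_{\delta/(4K+10)}(\bar x)\neq\varnothing$; but these are trivial because $\bar x$ itself belongs to both $A$ and $B$ by hypothesis. Theorem \ref{tansub} then delivers property $(\mathcal{T})$ at $\bar x$.

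I do not expect any real obstacle here: the essential work has already been done in proving Lemma \ref{ttth} and Theorem \ref{tansub}. The role of the assumption $\bar x\in A\cap B$ in this corollary is exactly to make the nonemptiness conditions of Definition \ref{TTrevisited} and of the second part of Theorem \ref{tansub} automatic, which is what explains why property $(\mathcal{T})$, a notion defined without requiring $\bar x$ to lie in $A\cap B$, collapses to ordinary subtransversality in this classical setting.
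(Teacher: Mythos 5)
Your proposal is correct and follows exactly the route the paper intends: the corollary is stated as an immediate consequence of Theorem \ref{tansub} together with the metric characterization of subtransversality in Proposition \ref{subtrineq}, with the observation that $\bar x\in A\cap B$ makes the nonemptiness hypotheses of the second part of Theorem \ref{tansub} automatic. Nothing is missing.
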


The following proposition is a reformulation of Corollary \ref{corol1}.

\begin{proposition}\label{prop_coupsubtr}
	Under completeness of the space $X$, $A$ and $B$ are subtransversal at $\bar x$ if and only if there exist $\delta>0$ and $\kappa>0$ such that for all 
	$x\in A\cap \ball{\delta}{\bar x}$ and $y\in B\cap \ball{\delta}{\bar x}$, $x\ne y$, it holds
	$$|\nabla \phi|^\diamond (x, y) = \sup_{(u,v)\ne (x,y)}\frac{\max\{\phi(x,y)-\phi(u,v),0\}}{d((x,y),(u,v))}\ge \kappa.$$ 
\end{proposition}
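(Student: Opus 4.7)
The plan is to invoke Corollary \ref{corol1} in order to replace subtransversality by property $(\mathcal{T})$, and then to translate property $(\mathcal{T})$ directly into the nonlocal slope inequality for the coupling function $\phi$ of Definition \ref{coupling}. Thus the real work is to show that property $(\mathcal{T})$ at $\bar x$ is equivalent to the existence of $\delta > 0$ and $\kappa > 0$ such that $|\nabla\phi|^{\diamond}(x,y) \ge \kappa$ for every pair $x \in A \cap \ball{\delta}{\bar x}$, $y \in B \cap \ball{\delta}{\bar x}$ with $x \ne y$.

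The bridge between the two conditions is the elementary observation that $\phi(x,y) = d(x,y)$ on $A \times B$ and $\phi \equiv +\infty$ off $A \times B$. Consequently, when $x \in A$ and $y \in B$, the numerator $\phi(x,y) - \phi(u,v)$ can be positive only if $u \in A$ and $v \in B$, and in that case the ratio appearing in $|\nabla\phi|^{\diamond}(x,y)$ simplifies to
\[
\frac{d(x,y) - d(u,v)}{d(x,u) + d(y,v)},
\]
due to the product metric chosen on $X \times X$.

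For the direction from property $(\mathcal{T})$ to the slope inequality, given such a pair $(x,y)$ with $x \ne y$ I would apply the second form of Definition \ref{TTrevisited} to obtain $\hat x \in A$ and $\hat y \in B$ satisfying the $(\mathcal{T})$-inequality, and plug $(u,v) = (\hat x, \hat y)$ into the ratio above. Combining the $(\mathcal{T})$-inequality with the trivial bound $\max\{a,b\} \ge (a+b)/2$ yields a lower bound of $1/(2M)$ on the ratio, so $\kappa := 1/(2M)$ works. Conversely, for the direction from the slope inequality to property $(\mathcal{T})$, I would fix any $\kappa' \in (0,\kappa)$ and, for a pair $(x,y)$ with $x \in A$, $y \in B$, $x \ne y$ close to $\bar x$, extract from the definition of $|\nabla\phi|^{\diamond}$ a witness $(u,v) \ne (x,y)$ making the ratio exceed $\kappa'$. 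The support observation forces $u \in A$ and $v \in B$, and since $d(x,u) + d(y,v) \ge \max\{d(x,u), d(y,v)\}$, rearranging gives precisely the $(\mathcal{T})$-inequality with $M = 1/\kappa'$, $\hat x^A = u$, $\hat x^B = v$.

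I do not expect a serious obstacle: both implications reduce to algebraic manipulations once the support of $\phi$ is identified. The points requiring mild care are the product-metric bookkeeping (which produces the factor $2$ between $\kappa$ and $1/M$) and matching the neighborhood radii of property $(\mathcal{T})$ with those of the slope condition — the auxiliary nonemptiness requirements $A \cap \ball{\delta/(2(1+2M))}{\bar x} \ne \varnothing$ and $B \cap \ball{\delta/(2(1+2M))}{\bar x} \ne \varnothing$ are automatic because subtransversality forces $\bar x \in A \cap B$.
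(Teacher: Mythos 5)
Your proposal is correct and follows essentially the same route as the paper: both reduce, via Corollary \ref{corol1}, to showing that property $(\mathcal{T})$ is equivalent to the nonlocal slope bound, using the observation that $\phi = d$ on $A\times B$ and $\phi = +\infty$ elsewhere, with the same constant bookkeeping ($\kappa = 1/(2M)$ one way, $M$ of order $1/\kappa$ the other). The only cosmetic difference is that you work with the second (max-form) formulation of Definition \ref{TTrevisited} while the paper uses the $\theta$-form, which changes nothing of substance.
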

\begin{proof}
Assume that the sets $A$ and $B$ are subtransversal. Then according to Corollary \ref{corol1} property $(\mathcal{T})$ holds.

Then, there exist $M>0$, $\delta >0$ such that for any
	two different points $x^A \in \ball{\delta}{\bar x}\cap A$ and $x^B \in \ball{\delta}{\bar x}\cap B$, 
	there exist $\theta>0$, $x^A \in A$ 
	with $d(x^A,\hat x^A)\le  \theta M$, and $x^B \in B$ with $d(x^B,\hat x^B)\le  M\theta$,  
	and the following inequality holds true $$d(\hat x^A,\hat x^B) \le d(x^A,x^B) -\theta .$$
	Clearly, the last inequality yields that $(\hat x^A,\hat x^B)\ne (x^A,x^B).$
Remind that $\phi(x^A,x^B)=d(x^A,x^B)$ (Definition \ref{coupling}) since $x^A\in A$ and $x^B\in B$ and similarly $\phi(\hat x^A,\hat x^B)=d(\hat x^A,\hat x^B).$
This leads to
	$$d(x^A,x^B)-d(\hat x^A,\hat x^B) \ge \theta\ge \frac{d(x^A,\hat x^A)+d( x^B, \hat x^B)}{2M}$$
	Therefore
	$$\frac{\phi(x^A , x^B) - \phi(\hat x^A , \hat x^B)}{d((x^A, x^B),(\hat x^A, \hat x^B))}=\frac{d(x^A , x^B) - d(\hat x^A , \hat x^B)}{d(x^A,\hat x^A)+d(x^B, \hat x^B)} \ge \frac{1}{2M}\, .$$
	Thus we obtain that 
	$$ |\nabla \phi|^\diamond(x^A, x^B) \ge \frac{1}{2M}$$ for
	any
	two different points $x^A \in \ball{\delta}{\bar x}\cap A$ and $x^B \in \ball{\delta}{\bar x}\cap B$. 
	%which shows that $A$ and $B$ are intrinsically transversal at $\bar x \in A\cap B$.

	For the reverse direction, we have that for some $\delta>0$ and $\kappa>0$ and for any two different points $x\in A\cap \ball{\delta}{\bar x}$ and $y\in B\cap \ball{\delta}{\bar x}$ $$|\nabla \phi|^\diamond (x, y) = \sup_{(u,v)\ne (x,y)}\frac{\max\{\phi(x,y)-\phi(u,v),0\}}{d((x,y),(u,v))}\ge \kappa>0.$$
	So fix $x\in A\cap \ball{\delta}{\bar x}$ and $y\in B\cap \ball{\delta}{\bar x}$ with $x\ne y$. We obtain that there are $u$ and $v$ such that 
	$$\frac{\phi(x,y)-\phi(u,v)}{d(x,u)+d(y,v)}\ge\frac{\kappa}{2}$$
As above, $\phi(x,y)=d(x,y)$. Observe that $\phi(u,v)<\infty$, thus $u\in A$ and $v\in B$.
Hence $d(u,v)\le d(x,y)-\theta$ where $\displaystyle\theta=\frac{\kappa}{2}(d(x,u)+d(y,v))>0.$
	Moreover $\displaystyle d(x,u)\le \frac{2}{\kappa}\theta$ and $\displaystyle d(y,v)\le \frac{2}{\kappa}\theta$.
	 Thus we obtain that property $(\mathcal{T})$ holds with constants $\delta$ and $\displaystyle M:=\frac{2}{\kappa}.$
\end{proof}

%---------------------- Transversality--------------- 

\section{Primal space characterizations of transversality }

We continue to obtain primal space characterizations of transversality.
A direct consequence of the definition of transversality and Theorem \ref{tansub} is a characterization of transversality in terms of 
``translated'' subtransversality.

\begin{proposition}\label{tsttra}
	Let $A$ and $B$ be closed subsets of the Banach space $X$
	and $\bar x \in A\cap B$. Then $A$ and $B$ are transversal at $\bar x$ if and only if there exist
	$\delta > 0$ and $M > 0$ such that for any $a\in \ball{\delta}{\textbf{0}}$ and $b\in \ball{\delta}{\textbf{0}}$, any $x^A \in A \cap   \ball{\delta}
	{\bar x+a}$ and $x^B \in B \cap    \ball{\delta}{\bar x+b}$ with $x^A-a\not =x^B-b$ there exist $\theta  > 0$, $\hat x^A \in A$ and  $\hat x^B \in B$ such that
	$$\|x^A- \hat x^A\|\le \theta M \ , \ \
	\|x^B- \hat x^B\|\le \theta M \ \mbox{ and } $$
	$$\|\hat x^A-\hat x^B-(a-b)\|\le \|x^A- x^B-(a-b)\|  - \theta  \ .$$
\end{proposition}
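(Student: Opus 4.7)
The plan is to observe that the stated property is simply property $(\mathcal{T})$ of Definition \ref{TTrevisited} applied to the translated pair $(A-a,\,B-b)$ at the base point $\bar x$, uniformly for $a,b\in\ball{\delta}{\mathbf{0}}$. Under the substitution $y^A:=x^A-a$, $y^B:=x^B-b$, $\hat y^A:=\hat x^A-a$, $\hat y^B:=\hat x^B-b$, the inclusions $x^A\in A\cap\ball{\delta}{\bar x+a}$ and $x^B\in B\cap\ball{\delta}{\bar x+b}$ become $y^A\in(A-a)\cap\ball{\delta}{\bar x}$ and $y^B\in(B-b)\cap\ball{\delta}{\bar x}$, the distinctness $x^A-a\ne x^B-b$ becomes $y^A\ne y^B$, and the key inequality $\|\hat x^A-\hat x^B-(a-b)\|\le\|x^A-x^B-(a-b)\|-\theta$ translates verbatim into $\|\hat y^A-\hat y^B\|\le\|y^A-y^B\|-\theta$. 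Since $\bar x\in A\cap B$, the points $\bar x-a\in A-a$ and $\bar x-b\in B-b$ automatically witness the nonemptiness preconditions in Definition \ref{TTrevisited} for $(A-a,B-b)$ whenever $\|a\|$ and $\|b\|$ are sufficiently small.

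The second ingredient is Proposition \ref{tra}, which characterizes transversality of $A,B$ at $\bar x$ as the uniform subtransversality inequality $d(x,(A-a)\cap(B-b))\le K(d(x,A-a)+d(x,B-b))$ on a neighborhood of $\bar x$ for all small $a,b$. Hence the whole statement reduces to the equivalence, uniformly in $a,b$, between property $(\mathcal{T})$ at $\bar x$ for $(A-a,B-b)$ and the subtransversality inequality for $(A-a,B-b)$ at $\bar x$. This equivalence is precisely Theorem \ref{tansub} applied to each translated pair; the only thing to verify is that the constants produced depend only on the input constants $(M,\delta)$, resp.\ $(K,\delta)$, and not on the specific translates.

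For the forward direction, start from transversality, extract constants $(K,\delta_0)$ from Proposition \ref{tra}, and apply the second part of Theorem \ref{tansub} to each translated pair $(A-a,B-b)$ at $\bar x$ to obtain property $(\mathcal{T})$ with the universal constants $M:=K+2$ and radius $\delta_0$; the nonemptiness hypothesis there (at radius $\delta_0/(4K+10)$) is met as soon as $\|a\|,\|b\|\le\delta_0/(4K+10)$, since $\bar x-a\in A-a$ and $\bar x-b\in B-b$. Translating back via the substitution above yields the stated property with $\delta:=\delta_0/(4K+10)$ and $M:=K+2$. For the reverse direction, assume the stated property holds with $(\delta,M)$, rewrite it as uniform property $(\mathcal{T})$ for $(A-a,B-b)$ at $\bar x$ for $\|a\|,\|b\|\le\delta/(2(1+2M))$, and apply the first part of Theorem \ref{tansub} to obtain the subtransversality inequality for each translated pair with constants depending only on $(M,\delta)$; Proposition \ref{tra} then delivers transversality.

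The only real obstacle is the bookkeeping of constants, which must be kept uniform in the translation parameters $a,b$. This is guaranteed by the explicit form of the constants in the proof of Theorem \ref{tansub} (namely $\hat\delta=\delta/(8(1+2M))$ and rate $M+1$ in the first part, and $M=K+2$ in the second), together with the fact that $\bar x\in A\cap B$ makes the nonemptiness prerequisites for all translated pairs $(A-a,B-b)$ satisfiable by a single choice of radius. No conceptual work beyond Theorem \ref{tansub} and Proposition \ref{tra} is needed.
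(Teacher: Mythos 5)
Your proposal is correct and follows essentially the same route as the paper's own proof: translate by $a,b$ to recast the stated property as property $(\mathcal{T})$ for the pairs $(A-a,B-b)$ at $\bar x$, then apply both directions of Theorem \ref{tansub} together with Proposition \ref{tra}, with constants $M=K+2$, $\delta=\delta_0/(4K+10)$ one way and $K=M+1$ the other. Your explicit attention to the uniformity of the constants in $a,b$ (via the explicit constants in the proof of Theorem \ref{tansub}) is a point the paper glosses over but uses implicitly.
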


\begin{proof}
	Let $A$ and $B$ be transversal at $\bar x$ with constants $K$ and $\hat\delta$. Denote $\delta=\hat\delta/(4K+10)$. Then for all $a\in\ball{\delta}{\mathbf{0}}$ and $b\in\ball{\delta}{\mathbf{0}}$, the sets $A-a$ and $B-b$ have property $(\mathcal{T})$ with constants $\delta$ and $M=K+2$ according to Theorem \ref{tansub}. \\
	Now let the sets satisfy the above property with constants $\delta$ and $M$. Thus for all $\displaystyle a\in\ball{\frac{\delta}{2(1+2M)}}{\mathbf{0}}$ and $\displaystyle b\in\ball{\frac{\delta}{2(1+2M)}}{\mathbf{0}}$, the sets $A-a$ and $B-b$ have property $(\mathcal{T})$ with constants $\delta$ and $M$. Then, again Theorem \ref{tansub} implies that 
	$$d(x,(A-a)\cap(B-b))\le (M+1)(d(x,A-a)+d(x,B-b))$$ 
	for all $x\in\ball{\delta}{\bar x}$, which is precisely transversality.
	
\end{proof}

Strengthening in one of the directions of this proposition gives a characterization of transversality in terms of 
``translated'' tangential transversality.

\begin{proposition}\label{ttttra}
	Let $A$ and $B$ be closed subsets of the Banach space $X$
	and $\bar x \in A\cap B$. Then $A$ and $B$ are transversal at $\bar x$ if and only if there exist
	$\delta > 0$ and $M > 0$ such that for any $a\in \ball{\delta}{\textbf{0}}$ and $b\in \ball{\delta}{\textbf{0}}$, 
	any $x^A \in A \cap   \ball{\delta}	{\bar x+a}$ and $x^B \in B \cap    \ball{\delta}{\bar x+b}$ with $x^A-a\not =x^B-b$ 
	there exist $\{x_m^A\}_{m\ge 1} \subset A$, $\{x_m^B\}_{m\ge 1} \subset B$ and $t_m\searrow 0$ such that  
	$$\|x_m^A-x^A\|\le t_mM \ , \ \
	\|x_m^B-x^B\|\le t_mM \ \mbox{ and } $$
	$$\| x_m^A-x_m^B-(a-b)\|\le \|x^A- x^B-(a-b)\|  - t_m  \ .$$ 
\end{proposition}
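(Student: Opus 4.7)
The plan is to prove the two directions separately, relying on Propositions \ref{tra} and \ref{tsttra}. For the (sequence property) $\Rightarrow$ (transversality) direction, the argument is immediate: given admissible $(a, b, x^A, x^B)$, one picks any single index $m$ from the provided sequence and sets $\theta := t_m$, $\hat x^A := x_m^A$, $\hat x^B := x_m^B$. The three inequalities of the sequence property reduce exactly to the three inequalities of the translated property $(\mathcal{T})$ in Proposition \ref{tsttra}, so transversality follows.

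For the converse, suppose $A, B$ are transversal at $\bar x$ with Ioffe constants $K_0, \delta_0$ from Proposition \ref{tra}. Fix admissible $(a, b, x^A, x^B)$ with $w := (x^A - a) - (x^B - b) \ne 0$, and write $u := w/\|w\|$, $y := x^A - a$, $y' := x^B - b$. The central idea is to apply Ioffe's inequality not to the natural pair $A - a, B - b$ but to the \emph{shifted} pair $A - a$ and $B - (b - tu)$, with test point $y$ and parameter $t \in (0, \|w\|)$. Since $y \in A - a$, $d(y, A - a) = 0$; since $y' + tu \in B - (b - tu)$, one also has $d(y, B - b + tu) \le \|w - tu\| = \|w\| - t$. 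The right-hand side of the Ioffe inequality is then finite, so the intersection is non-empty and for every $\varepsilon > 0$ there exists $r \in (A - a) \cap (B - b + tu)$ with $\|y - r\| \le K_0(\|w\| - t) + \varepsilon$. Setting $x_t^A := r + a \in A$ and $x_t^B := r + b - tu \in B$ yields the algebraic identity $(x_t^A - a) - (x_t^B - b) = tu$, so $\|x_t^A - x_t^B - (a - b)\| = t$; writing $t_m := \|w\| - t$ rewrites this as the target equality $\|x_t^A - x_t^B - (a - b)\| = \|w\| - t_m$.

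The routine triangle-inequality estimates give $\|x_t^A - x^A\| = \|r - y\| \le K_0 t_m + \varepsilon$ and $\|x_t^B - x^B\| \le \|r - y\| + \|w - tu\| \le (K_0 + 1) t_m + \varepsilon$; choosing $\varepsilon := t_m$ and $M := K_0 + 2$ delivers the required bounds, and letting $t \nearrow \|w\|$ produces $t_m \searrow 0$. The main obstacle is the bookkeeping of constants to keep all arguments of Ioffe's inequality inside their validity ball: using $\|w\| \le \|y - \bar x\| + \|\bar x - y'\| \le 2\delta$ and $t < \|w\|$, one has $\|b - tu\| < 3\delta$, so the choice $\delta := \delta_0 / 3$ suffices. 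The conceptual point---flagged in the introduction---is that the uniformity of the Ioffe inequality in $(a, b)$ allows us to realize \emph{every} step size $t_m \in (0, \|w\|)$, not just a single positive step, which is precisely what separates transversality from mere subtransversality.
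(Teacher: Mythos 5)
Your proof is correct, and its converse direction takes a genuinely different route from the paper's. The ``if'' direction is the same in both: a single term of the sequence already witnesses the property of Proposition \ref{tsttra}. For ``only if'', the paper returns to the definition of transversality as metric regularity of the map $H_{A,B}$: with $w:=x^A-x^B-(a-b)$ and $v:=-w/\|w\|$ it perturbs the range point to $x^A-x^B+t_mv$ and uses regularity to select near-optimal preimages $(x_m^A,x_m^B)\in H^{-1}(x^A-x^B+t_mv)$, which yields $x_m^A-x_m^B-(a-b)=(\|w\|-t_m)\,w/\|w\|$ together with $\|x_m^A-x^A\|+\|x_m^B-x^B\|\le (K+1)t_m$. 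You instead invoke the translated distance estimate of Proposition \ref{tra}, applied to the shifted translation pair $(a,\,b-tu)$ at the point $y=x^A-a$, and recover the new points from a common point $r$ of $A-a$ and $B-b+tu$; after unwinding the translations, the points you construct have exactly the same form (translated gap $(\|w\|-t_m)u$, displacements $O(t_m)$), so the two arguments build essentially the same objects with different engines. The paper's version is self-contained modulo Definition \ref{def_tran}, while yours outsources the hard step to Proposition \ref{tra}, a cited nontrivial equivalence; in exchange, your version isolates the feature of transversality actually being exploited, namely the uniformity of the subtransversality-type estimate in the translation parameters. Your bookkeeping is sound: $\delta=\delta_0/3$ keeps $a$, $b-tu$ and $y$ inside the validity balls (since $\|w\|\le 2\delta$ and $t<\|w\|$); the finiteness of the right-hand side legitimately forces $(A-a)\cap(B-b+tu)\ne\varnothing$ under the convention $d(\cdot,\varnothing)=+\infty$, the same convention the paper invokes in its own proof; and the choices $\varepsilon:=t_m$, $M:=K_0+2$ close the estimates.
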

\begin{proof}
	The  ``if'' direction is straightforward from Proposition \ref{tsttra}. \\
	For the converse, let $A$ and $B$ be transversal at $\bar x$. This means that the map $H:=H_{A,B}$ from \eqref{hmap} is regular at $((\bar x,\bar x),\textbf{0})$ with constants $\hat\delta$ and $K$. 
	Take $\delta<\hat\delta/4$, $a\in \ball{\delta}{\textbf{0}}$, $b\in \ball{\delta}{\textbf{0}}$ and $x^A\in  A \cap   \ball{\delta}	{\bar x+a}$ and $x^B\in B \cap   \ball{\delta}	{\bar x+b}$. Then
		\begin{align*}
	\|x^A-x^B\|&=\|x^A-(\bar x+a)-(x^B-(\bar x+b))+a-b\|=\\
	&\le \|x^A-(\bar x+a)\|+\|x^B-(\bar x+b)\|+\|a\|+\|b\|\le 4\delta<\hat \delta
	\end{align*}
	Define $\displaystyle v=-\frac{x^A-x^B-(a-b)}{\|x^A-x^B-(a-b)\|}$ and choose a sequence $t_m\searrow 0$ such that
	$$x^A-x^B+t_mv\in \ball{\hat\delta}{\textbf{0}}\, .$$
	Metric regularity of $H$ implies that 
	$$d((x^A,x^B),H^{-1}(x^A-x^B+t_mv))\le K d(H(x^A,x^B),x^A-x^B+t_m
	v).$$
	Since the distance to the empty set is $+\infty$, we have that $H^{-1}(x^A-x^B+t_mv)\ne\varnothing.$
	For $m\ge 1$ consider $(x_m^A,x_m^B)\in H^{-1}(x^A-x^B+t_mv)\subset A\times B$ such that 
	$$\|(x_m^A,x_m^B)-(x^A,x^B)\|\le d((x^A,x^B),H^{-1}(x^A-x^B+t_mv))+t_m.$$
	Metric regularity once again implies 
	\begin{align*}
	\|(x_m^A,x_m^B)-(x^A,x^B)\|&\le d((x^A,x^B),H^{-1}(x^A-x^B+t_mv))+t_m\\
	&\le Kd(x^A-x^B+t_mv,H(x^A,x^B))+t_m\\
	&= K\|x^A-x^B+t_mv-(x^A-x^B)\|+t_m=(K+1)t_m\, .
	\end{align*}
	Finally, we have that
	\begin{align*}
    \|x_m^A-x_m^B-(a-b)\|=&
	\|x^A-x^B+t_m v-(a-b)\|=\\=&\|x^A-x^B-(a-b)\|-t_m\, .
	\end{align*}
	
\end{proof}

\begin{remark}
	In the above proposition we can obtain the (formally) stronger statement that there exists $\lambda>0$ such that the decreasing property holds for any $t\in(0, \lambda]$ instead of the sequence $\{t_n\}_{n=1}^{\infty}$ tending to zero from above.
\end{remark}

\begin{remark}
	Propositions \ref{tsttra} and \ref{ttttra} remain true if we consider translations in only one of the sets,  i.e. we may take $a=0$ and only vary $b$.
\end{remark}

The following proposition is a reformulation of Proposition \ref{tsttra} and Proposition \ref{ttttra} is used for the equality of local and nonlocal slope.
\begin{proposition}\label{prop_couptr}
	Under the assumption that $X$ is a Banach space, $A$ and $B$ are transversal at $\bar x$ if and only if there exist $\delta>0$ and $\kappa>0$ such that for all $a$ and $b$ with 
	$\|a\|\le \delta$ and $\|b\|\le\delta$ and all $x\in (A-a)\cap \ball{\delta}{\bar x}$ and $y\in (B-b)\cap \ball{\delta}{\bar x}$ with $x\ne y$ it holds
	$$|\nabla \phi_{a, b}|^\diamond (x, y) = \sup_{(u,v)\ne (x,y)}\frac{\max\{\phi_{a,b}(x,y)-\phi_{a,b}(u,v),0\}}{\|(x,y)-(u,v)\|}\ge \kappa$$ 
	where $\phi_{a,b}$ denotes the coupling function of $A-a$ and $B-b$ (remind Definition (\ref{coupling}))
	
	Moreover, this is equivalent to the existence of $\hat\kappa>0$ such that for all $x\in (A-a)\cap \ball{\delta}{\bar x}$ and $y\in (B-b)\cap \ball{\delta}{\bar x}$ 
	$$|\nabla \phi_{a, b}| (x, y)=\limsup_{(u,v)\to (x,y)}\frac{\max\{\phi_{a,b}(x,y)-\phi_{a,b}(u,v),0\}}{\|(x,y)-(u,v)\|} \ge\hat\kappa$$ % $\limsup$ could be used in the last inequality.
\end{proposition}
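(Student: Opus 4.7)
The plan is to follow the same blueprint as Proposition \ref{prop_coupsubtr}, but applied uniformly to the translated pairs $A-a$, $B-b$ for $a,b$ near $\mathbf{0}$. The key dictionary supplied by Definition \ref{coupling} is that $\phi_{a,b}(x,y)=\|x-y\|$ whenever $x\in A-a$ and $y\in B-b$, and $\phi_{a,b}=+\infty$ otherwise; thus any competitor $(u,v)$ with $\phi_{a,b}(u,v)<\phi_{a,b}(x,y)$ automatically lies in $(A-a)\times(B-b)$, which lets every slope computation talk directly to Proposition \ref{tsttra}.

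First I would prove the equivalence of transversality with the nonlocal slope bound by reducing it to Proposition \ref{tsttra}. In the forward direction, take $\delta,M$ from that proposition (after shrinking so the translated balls fit). Given $\|a\|,\|b\|\le\delta$ and $x\in(A-a)\cap\ball{\delta}{\bar x}$, $y\in(B-b)\cap\ball{\delta}{\bar x}$ with $x\ne y$, set $x^A:=x+a$ and $x^B:=y+b$ and invoke Proposition \ref{tsttra} to extract $\theta>0$ and $\hat x^A\in A$, $\hat x^B\in B$. Letting $u:=\hat x^A-a$ and $v:=\hat x^B-b$, the decrease inequality reads $\phi_{a,b}(x,y)-\phi_{a,b}(u,v)\ge\theta$, while the displacement bounds give $\|(x,y)-(u,v)\|\le 2\theta M$; the ratio is then $\ge 1/(2M)$, which yields the nonlocal slope bound with $\kappa:=1/(2M)$. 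The converse is equally direct: given the slope bound with constant $\kappa$, pick $(u,v)\ne(x,y)$ with quotient $\ge\kappa/2$; finiteness of $\phi_{a,b}(u,v)$ forces $u\in A-a$ and $v\in B-b$, so $\hat x^A:=u+a\in A$ and $\hat x^B:=v+b\in B$, together with $\theta:=(\kappa/2)\|(x,y)-(u,v)\|>0$, verify the hypotheses of Proposition \ref{tsttra} with constant $M:=2/\kappa$.

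For the equivalence of the local and nonlocal slope formulations, one direction is free from the pointwise inequality $|\nabla\phi_{a,b}|\le|\nabla\phi_{a,b}|^\diamond$, so (3) trivially implies (2). For the reverse, I would substitute Proposition \ref{ttttra} for Proposition \ref{tsttra} in the first step above: the sequences $\{x_m^A\}\subset A$, $\{x_m^B\}\subset B$ and $t_m\searrow 0$ produced there yield pairs $(u_m,v_m):=(x_m^A-a,x_m^B-b)\to(x,y)$ along which the same ratio stays bounded below by $1/(2M)$, which is exactly a lower bound for the local slope $|\nabla\phi_{a,b}|(x,y)$, with $\hat\kappa:=1/(2M)$.

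The main nuisance is really bookkeeping rather than a genuine obstacle: the ball radius used in Propositions \ref{tsttra} and \ref{ttttra} is stated around $\bar x+a$ and $\bar x+b$, whereas the slope condition in the statement is phrased around $\bar x$ itself, so one must shrink $\delta$ once to absorb the translations uniformly (e.g.\ replacing $\delta$ by $\delta/2$ suffices, since $\|a\|,\|b\|\le\delta$). This is the same adjustment already performed inside the proof of Proposition \ref{tsttra}, so no new idea is needed. The conceptual content is simply that $\phi_{a,b}$ encodes the quantity $\|x^A-x^B-(a-b)\|$ whose strict decrease underlies translated (tangential) transversality.
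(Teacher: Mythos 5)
Your proposal is correct and follows essentially the same route as the paper: the first equivalence is argued exactly as in Proposition \ref{prop_coupsubtr}, transported to the translated sets via Proposition \ref{tsttra}, and the local/nonlocal equivalence uses the trivial pointwise inequality in one direction and Proposition \ref{ttttra} in the other. The only (harmless) difference is your shrinking of $\delta$, which is actually unnecessary: $x^A\in A\cap\ball{\delta}{\bar x+a}$ corresponds exactly to $x=x^A-a\in(A-a)\cap\ball{\delta}{\bar x}$, so the translated balls already match.
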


\begin{proof}
The proof of the first part of the proposition is analogous to the proof Proposition \ref{prop_coupsubtr}.

For the second part, clearly $|\nabla \phi_{a, b}|^\diamond (x, y) \ge |\nabla \phi_{a, b}| (x, y) $, and thus the second slope type property implies the first one.

For the reverse implication, the first part of the Proposition implies that the sets $A$ and $B$ are  transversal at $\bar x \in A\cap B$ with some constants $\delta>0$ and $M>0$. Fix  $a\in \ball{\delta}{\textbf{0}}$ and $b\in \ball{\delta}{\textbf{0}}$ and $x\in (A-a)\cap \ball{\delta}{\bar x}$ and $y\in (B-b)\cap \ball{\delta}{\bar x}$, $x\ne y$. Recall that Proposition  \ref{ttttra} implies that for $x^A:=x+a \in A \cap   \ball{\delta}	{\bar x+a}$ and $x^B:=y+b \in B \cap    \ball{\delta}{\bar x+b}$ (thus $x^A-a\not =x^B-b$) 
	there exist $\{x_m^A\}_{m\ge 1} \subset A$, $\{x_m^B\}_{m\ge 1} \subset B$ and $t_m\searrow 0$ such that  
	$$\|x_m^A-x^A\|\le t_mM \ , \ \
	\|x_m^B-x^B\|\le t_mM \ \mbox{ and } $$
	$$\| x_m^A-x_m^B-(a-b)\|\le \|x^A- x^B-(a-b)\|  - t_m  \ .$$ 
	This is equivalent to
	$$\frac{d(x^A-a,x^B-b)-d(x^A_m-a,x^B_m-b)}{t_m} \ge 1\, . $$
	Using that $d(x^A-a,x^A_m-a)\le Mt_m$ and $d(x^B-b,x^B_m-b)\le Mt_m$, we have that
	$$\frac{d(x^A -a, x^B-b) - d(x^A_m-a , x^B_m-b)}{d(x^A-a,x^A_m-a)+d(x^B-b, x^B_m-b)} \ge \frac{1}{2M} \, .$$
	whence 
	$$\frac{d(x, y) - d(x^A_m-a , x^B_m-b)}{d(x,x^A_m-a)+d(y, x^B_m-b)} \ge \frac{1}{2M} \, .$$
	Since $x\in A-a$ and $y\in B-b$, $\phi_{a,b}(x,y)=d(x,y)$. Similarly $x_m^A-a\in A-a$ and $x_m^B-b\in B-b$, hence $\phi_{a,b}(x_m^A-a,x_m^B-b)=d(x_m^A-a,x_m^B-b).$
	Moreover $x_m^A-a\to x,\ x_m^B-b\to y$. This implies that 
$$|\nabla \phi_{a, b}| (x, y)=\limsup_{(u,v)\to (x,y)}\frac{\max\{\phi_{a,b}(x,y)-\phi_{a,b}(u,v),0\}}{\|(x,y)-(u,v)\|} \ge\frac{1}{2M}$$

\end{proof}
%---------------------------------------------

%\section{Intrinsic transversality - extensions and related notions}
\section{Intrinsic transversality - extensions and related notions}

In this section we provide a metric characterization of intrinsic transversality. This characterization could be used as a definition of intrinsic transversality in general metric spaces. Moreover, we show that it is almost equivalent to the notion of tangential transversality, via observing a slope type characterization of the latter. Finally we show that the metric characterization we provide is equivalent in Hilbert spaces to a characterization introduced and studied in \cite{TBCV2018}.

Fix a point $\bar x\in A\cap B$. First, we provide a characterization of tangential transversality in terms of the slope of the coupling function (remind Definition \ref{coupling}).

\begin{proposition}\label{prop_couptt}
	The subsets $A$ and $B$ of the metric space $X$ are tangentially transversal at $\bar x$ if and only if there exist $\delta>0$ and $\kappa>0$ such that for any two different points 
	$x\in A\cap \ball{\delta}{\bar x}$ and $y\in B\cap \ball{\delta}{\bar x}$ it holds
	\begin{equation*}
	{|\nabla \phi|}(x, y) = \limsup_{(u,v)\to (x,y)}\frac{\max\{\phi(x,y)-\phi(u,v),0\}}{d((x,y),(u,v))}\ge \kappa \, .
	\end{equation*}
\end{proposition}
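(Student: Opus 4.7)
The plan is to exploit that whenever $(u,v) \in A \times B$ one has $\phi(u,v) = d(u,v)$, so both conditions concern comparisons between $d(x,y) - d(u,v)$ and the displacement of $(u,v)$ from $(x,y)$ measured in the product metric $d_X + d_X$. The two directions then reduce to manipulations of the elementary inequality $\max\{a,b\} \ge \tfrac{1}{2}(a+b)$ for $a,b \ge 0$, together with Proposition \ref{tangtr_eq} on the tangential side.

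For the forward implication, I would start from the equivalent reformulation of tangential transversality given by Proposition \ref{tangtr_eq}, with constants $\delta$ and $\zeta$. For distinct points $x \in A \cap \mathbf{B}_\delta(\bar x)$ and $y \in B \cap \mathbf{B}_\delta(\bar x)$, I extract the guaranteed sequences $\{x_m^A\} \subset A$ and $\{x_m^B\} \subset B$ converging to $x,y$ with $\max\{d(x,x_m^A), d(y, x_m^B)\} > 0$, and estimate
$$\frac{\phi(x,y) - \phi(x_m^A,x_m^B)}{d((x,y),(x_m^A,x_m^B))} = \frac{d(x,y) - d(x_m^A,x_m^B)}{d(x,x_m^A) + d(y,x_m^B)} \ge \frac{\zeta \max\{d(x,x_m^A), d(y,x_m^B)\}}{d(x,x_m^A) + d(y,x_m^B)} \ge \frac{\zeta}{2}.$$
Since $(x_m^A,x_m^B) \to (x,y)$ along a sequence where the ratio is bounded below by $\zeta/2$, passing to the $\limsup$ gives $|\nabla \phi|(x,y) \ge \kappa := \zeta/2$.

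For the reverse implication, assume the slope estimate holds with constants $\delta$ and $\kappa$, and choose $\delta' \in (0,\delta)$ so that $\bar{\mathbf{B}}_{\delta'}(\bar x) \subset \mathbf{B}_\delta(\bar x)$. Given distinct $x^A \in A \cap \bar{\mathbf{B}}_{\delta'}(\bar x)$ and $x^B \in B \cap \bar{\mathbf{B}}_{\delta'}(\bar x)$, the definition of $|\nabla \phi|(x^A,x^B)$ as a $\limsup$ supplies a sequence $(u_m,v_m) \to (x^A,x^B)$ with $(u_m,v_m) \ne (x^A,x^B)$ along which eventually
$$\phi(x^A,x^B) - \phi(u_m,v_m) \ge \tfrac{\kappa}{2}\bigl(d(x^A,u_m) + d(x^B,v_m)\bigr).$$
The right-hand side being finite forces $\phi(u_m,v_m) < +\infty$, hence $u_m \in A$ and $v_m \in B$. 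The inequality then reduces to
$$d(u_m,v_m) \le d(x^A,x^B) - \tfrac{\kappa}{2}\max\{d(x^A,u_m), d(x^B,v_m)\},$$
which is precisely the condition in Proposition \ref{tangtr_eq} with constant $\zeta = \kappa/2$, establishing tangential transversality.

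The main obstacle is purely bookkeeping: one has to handle the $\limsup$ correctly (the ratio along a suitable sequence eventually exceeds $\kappa/2$, even if the $\limsup$ itself is only $\ge \kappa$), and one must invoke the finiteness of $\phi$ at the approximating points to promote them into $A$ and $B$. Once these two points are in place, the two directions are mirror images of the same elementary computation.
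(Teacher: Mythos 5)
Your proof is correct and follows essentially the same route as the paper's: the paper proves this proposition by the same computation it uses for Proposition \ref{prop_coupsubtr}, namely translating between the transversality-type decrease condition and the slope bound via the factor-of-two exchange between $d(x,x_m^A)+d(y,x_m^B)$ and $\max\{d(x,x_m^A),d(y,x_m^B)\}$, and promoting the approximating points into $A\times B$ through the finiteness of $\phi$. Your only (cosmetic) deviation is routing both directions through the reformulation in Proposition \ref{tangtr_eq} rather than Definition \ref{tangtr} directly.
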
 

\begin{proof}
	The proof is analogous to the proofs of Proposition \ref{prop_coupsubtr} and \ref{prop_couptr}.

\end{proof}

Intrinsic transversality is introduced in \cite{DIL2014}  and \cite{DIL2015} as a sufficient condition for local linear convergence of 
the alternating projections algorithm in finite dimensions. Drusvyatskiy, Ioffe and Lewis found a characterization of intrinsic transversality in finite dimensional spaces in terms of the slope of the coupling function (cf. Proposition 4.2 in \cite{DIL2015}). We use this characterization as a definition of intrinsic transversality in general metric spaces.
\begin{definition}\label{def_intrinsic}
	Let $X$ be a metric space. The closed sets $A, B \subset X$ are intrinsically transversal at the point $\bar x \in A\cap B$, if there
	exist $\delta>0$ and $\kappa>0$ such that for all $x^A\in \ball{\delta}{\bar x} \cap A\setminus B$ and 
	$x^B\in \ball{\delta}{\bar x} \cap B\setminus A$ it holds true that
	$$|\nabla \phi|(x^A, x^B) \ge \kappa \, . $$
	%where $\phi : X\times X \to \mathbb{R}\cup \{+\infty\}$ is the so-called coupling function given by
	%$$ \phi(x, y) = \delta_A(x)+\|x- y\|+\delta_B(y) \, .$$
\end{definition}

We continue to observe the ``almost'' equivalence of intrinsic transversality and tangential transversality. Due to Proposition \ref{prop_couptt} we have that the only difference between tangential transversality and intrinsic transversality is that in the original definition of tangential transversality the required condition should hold for all points of $A$ and $B$ (respectively) near the reference point, whereas in intrinsic transversality -- only for points in $A\setminus B$ and $B\setminus A$ (respectively).
We introduce the following property.
\begin{definition}[Property $(\mathcal{LT})$]\label{thm_equiv_intr}

We say that the closed sets $A$ and $B$ satisfy property $(\mathcal{LT})$ at $\bar x \in A \cap B$, if there exist $\varepsilon >0$ and  $\theta >0$ such that for any
	two different points $x^A \in   {\overline{\mathbf{B}}}_\varepsilon(\bar x)\cap A\setminus B$ and $x^B \in   {\overline{\mathbf{B}}}_\varepsilon(\bar x)\cap B\setminus A$, there exist sequences $t_m\searrow 0$,  $\{x_m^A\}_{m\ge 1} \subset A$ and $\{x_m^B\}_{m\ge 1} \subset B$ such that for all $m$ $$d(x_m^A, x^A)\le t_m,\ \ d(x_m^B, x^B)\le t_m,\ \ d(x_m^A,x_m^B) \le d(x^A,x^B) - t_m \theta\, .$$
\end{definition}
The comments above yield the following
\begin{corollary}
    The sets $A$ and $B$ are intrinsically transversal at $\bar x\in A\cap B$ if and only if they satisfy property $(\mathcal{LT})$ at $\bar x$.
    
\end{corollary}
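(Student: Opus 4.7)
The plan is to transcribe the argument of Proposition \ref{prop_couptt} almost verbatim, since the only structural difference between tangential transversality and intrinsic transversality is that the reference points are confined to $A\setminus B$ and $B\setminus A$. Property $(\mathcal{LT})$ imposes exactly the same restriction, so the two steps of the Proposition \ref{prop_couptt} proof translate without obstruction.

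For $(\mathcal{LT})\Rightarrow$ intrinsic transversality I would fix different $x^A\in \mathbf{\bar B}_\varepsilon(\bar x)\cap A\setminus B$ and $x^B\in \mathbf{\bar B}_\varepsilon(\bar x)\cap B\setminus A$, invoke the property to obtain $\{x_m^A\}\subset A$, $\{x_m^B\}\subset B$ and $t_m\searrow 0$ with $d(x_m^A,x^A)\le t_m$, $d(x_m^B,x^B)\le t_m$ and $d(x_m^A,x_m^B)\le d(x^A,x^B)-t_m\theta$. Since the coupling function reduces to the underlying distance on $A\times B$, the quotient
$$\frac{\phi(x^A,x^B)-\phi(x_m^A,x_m^B)}{d((x^A,x^B),(x_m^A,x_m^B))}=\frac{d(x^A,x^B)-d(x_m^A,x_m^B)}{d(x^A,x_m^A)+d(x^B,x_m^B)}$$
is bounded below by $\theta/2$, so $|\nabla\phi|(x^A,x^B)\ge \theta/2$; this is Definition \ref{def_intrinsic} with $\kappa=\theta/2$, after a trivial shrinkage of $\varepsilon$ to pass from the closed ball of Definition \ref{thm_equiv_intr} to the open ball of Definition \ref{def_intrinsic}.

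For the converse I would assume intrinsic transversality with constants $\delta,\kappa$, pick $\varepsilon<\delta$, and for any different $x^A\in \mathbf{\bar B}_\varepsilon(\bar x)\cap A\setminus B$ and $x^B\in \mathbf{\bar B}_\varepsilon(\bar x)\cap B\setminus A$ unfold the limsup in $|\nabla\phi|(x^A,x^B)\ge\kappa$ to extract a sequence $(u_m,v_m)\to(x^A,x^B)$, distinct from $(x^A,x^B)$, along which the defining ratio is eventually at least $\kappa/2$. Positivity of the numerator forces $\phi(u_m,v_m)<\infty$, whence $u_m\in A$ and $v_m\in B$; setting $t_m:=d(x^A,u_m)+d(x^B,v_m)\to 0$ together with $x_m^A:=u_m$, $x_m^B:=v_m$ yields property $(\mathcal{LT})$ with $\theta=\kappa/2$. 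The only piece of bookkeeping is the open-versus-closed-ball mismatch between the two definitions, which is dispatched by a harmless shrinkage of the radius; no genuine obstacle is expected, and no new idea beyond those already used in Proposition \ref{prop_couptt} is required.
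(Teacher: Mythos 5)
Your proof is correct and is exactly the paper's intended argument: the corollary is stated there as a consequence of the observation that the slope computation of Proposition \ref{prop_couptt} (slope of the coupling function $\ge$ constant $\Leftrightarrow$ the sequential decrease property) goes through verbatim when the points are restricted to $A\setminus B$ and $B\setminus A$, which is precisely what you carry out in both directions, with the same constants $\kappa=\theta/2$ and $\theta=\kappa/2$. The only superfluous step is your ``open versus closed ball'' shrinkage, since $\ball{\delta}{\bar x}$ in Definition \ref{def_intrinsic} already denotes the closed ball, but this is harmless.
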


In this way we answer a question of Prof. A. Ioffe about finding a metric characterization of intrinsic transversality.

The following example shows that although the difference is slight, the notion of tangential tranvsersality is stronger than the one of intrinsic transversality.
\begin{example} 
	Consider the sets in $\mathbb{R}^2$, $$A=\{(x,y)\ | \ y=3x,\ x\ge 0\}\cup \left\{\left(\frac{1}{n},\frac{2}{n}\right)\right\}_{n\ge 1}$$ and $$B=\{(x,y)\ | \ y=x,\ x\ge 0\}\cup \left\{\left(\frac{1}{n},\frac{2}{n}\right)\right\}_{n\ge 1}.$$
	Apparently these two sets are intrinsically transversal at $(0,0)$, however they are not tangentially transversal, because there are isolated points of the intersection in every neighbourhood of the reference point.
\end{example}
We are also able to answer some of the questions posed in \cite{BKRtt}:
\begin{enumerate}[1.]
	\item \textit{Tangential transversality is an intermediate property between
		transversality and subtransversality.   However, the exact relation between
		this new concept and the established notions of  transversality, intrinsic trans\-ver\-sa\-lity
		and subtransversality is not clarified yet.}
	
	This question is now fully answered in the case of complete metric spaces. The characterizations of intrinsic transversality and tangential transversality show that the examples at the end of Section 6 in \cite{DIL2015} may be used to prove that tangential transversality
	is strictly between transversality and subtransversality even in $\mathbb{R}^d$.
	
	\item \textit{It would be useful    to  find some  dual characterization of
		tangential trans\-ver\-sa\-lity.}
	
	The original definition of intrinsic transversality is stated in dual terms (Definition 2.2 in \cite{DIL2014} and Definition 3.1 in \cite{DIL2015}) -- the closed sets $A, B \subset \mathbb{R}^d$ are intrinsically transversal at the point $\bar x \in A\cap B$, if and  only if there exist $\delta>0$ and $\kappa>0$ such that for all $x^A\in \ball{\delta}{\bar x}\cap A\setminus B$ and $x^B\in\ball{\delta}{\bar x} \cap B\setminus A$ it holds true that
	$$\max\left\{d\left(\frac{x^A-x^B}{\|x^A-x^B\|},\, N_{B}\left(x^B\right)\right), d\left(\frac{x^B-x^A}{\|x^B-x^A\|}, N_{A}\left(x^A\right)\right) \right\} \ge \kappa \, , $$
	where $N_{D}\left(\bar x\right)$ is the proximal or limiting normal cone to $D$ at $\bar x$.		\\
	Replacing ``$x^A\in \ball{\delta}{\bar x}\cap A\setminus B$ and $x^B\in\ball{\delta}{\bar x} \cap B\setminus A$'' with ``$x^A\in \ball{\delta}{\bar x}\cap A$, $x^B\in\ball{\delta}{\bar x} \cap B$ and $x^A\ne x^B$'' we obtain a dual characterization of	tangential trans\-ver\-sa\-lity in finite dimensions.		
\end{enumerate}

It is known that intrinsic transversality and subtransverslity coincide for 
convex sets in finite-dimensional spaces (cf. Proposition 6.1 in \cite{Ioffe} or Corollary 3.4 in \cite{Kruger2017} for an alternative
proof). Both proofs exploit the dual characterizations of
intrinsic transversality and substransversality. Now we can easily obtain the slightly stronger result

\begin{corollary}
	Let $X$ be a Banach space. The closed convex sets $A, B \subset X$ are tangentially transversal at the point 
	$\bar x \in A\cap B$, if and only if they are subtransversal at $\bar x$.
\end{corollary}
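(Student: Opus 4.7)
The plan is to handle the two directions separately; the implication ``tangential transversality $\Rightarrow$ subtransversality'' requires no convexity whatsoever, while the converse uses convexity of $A$, $B$ and of the norm in an essential but transparent way.

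First I would observe that every term of the sequences supplied by tangential transversality furnishes a single witness for property $(\mathcal{T})$ of Definition \ref{TTrevisited}: given $x^A \in A$ and $x^B \in B$ in $\overline{\mathbf{B}}_\delta(\bar x)$ with $x^A \ne x^B$, the triple $(\hat x^A, \hat x^B, \theta) := (x_1^A, x_1^B, t_1 \eta)$ fulfils the required inequalities with the same constant $M$. Since $\bar x \in A \cap B$, Corollary \ref{corol1} then yields subtransversality.

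For the converse, suppose $A$ and $B$ are closed, convex, and subtransversal at $\bar x$. Corollary \ref{corol1} delivers property $(\mathcal{T})$ with some constants $\delta, M > 0$. Fix $x^A \in A \cap \overline{\mathbf{B}}_\delta(\bar x)$ and $x^B \in B \cap \overline{\mathbf{B}}_\delta(\bar x)$ with $x^A \ne x^B$, and let $\hat x^A \in A$, $\hat x^B \in B$, $\theta > 0$ be the corresponding $(\mathcal{T})$-witnesses. The key idea is to linearly interpolate: for each integer $m \ge 1$ set
$$x_m^A := \bigl(1 - \tfrac{1}{m}\bigr) x^A + \tfrac{1}{m}\, \hat x^A, \qquad x_m^B := \bigl(1 - \tfrac{1}{m}\bigr) x^B + \tfrac{1}{m}\, \hat x^B.$$
Convexity of $A$ and $B$ places $x_m^A \in A$ and $x_m^B \in B$; convexity of the norm then gives
$$\|x_m^A - x^A\| = \tfrac{1}{m}\|\hat x^A - x^A\| \le \tfrac{\theta}{m} M, \qquad \|x_m^B - x^B\| \le \tfrac{\theta}{m} M,$$
as well as
$$\|x_m^A - x_m^B\| \le \bigl(1 - \tfrac{1}{m}\bigr)\|x^A - x^B\| + \tfrac{1}{m}\|\hat x^A - \hat x^B\| \le \|x^A - x^B\| - \tfrac{\theta}{m}.$$
Setting $t_m := \theta/m \searrow 0$ shows that Definition \ref{tangtr} is satisfied with the same $M$ and with $\eta = 1$, proving tangential transversality at $\bar x$.

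I do not anticipate any genuine obstacle: the crux is simply the observation that, in the convex setting, the single ``descent step'' of magnitude $\theta$ delivered by property $(\mathcal{T})$ can be linearly rescaled to produce a continuum of arbitrarily small descent steps, which automatically upgrades the purely existential statement of $(\mathcal{T})$ to the sequential statement of tangential transversality.
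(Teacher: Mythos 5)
Your proof is correct and follows essentially the same route as the paper: obtain property $(\mathcal{T})$ from subtransversality via Corollary \ref{corol1}, then use convex interpolation $x_m^A = (1-r_m)x^A + r_m\hat x^A$ to rescale the single descent step $\theta$ into a sequence $t_m \searrow 0$ of descent steps, which is exactly the paper's argument (with $r_m = 1/m$ in place of a general sequence $r_n \searrow 0$). The only cosmetic point is in the easy direction: the $(\mathcal{T})$-witness $(x_1^A, x_1^B, t_1\eta)$ gives constant $M/\eta$ rather than $M$, which is harmless since, as the paper remarks after Definition \ref{tangtr}, one may normalize $\eta = 1$.
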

\begin{proof}{}
	It is enough to check, that if the sets are subtransversal, they are moreover tangentially transversal  (Definition \ref{tangtr}). According to the primal characterization obtained in Theorem \ref{tansub}, subtransversality implies property $(\mathcal{T})$ with some constants $\delta$ and $M$. Let $x^A\in A\cap\ball{\delta}{\bar x}$ and $x^B\in B\cap\ball{\delta}{\bar x}$. Then there are $\hat x^A \in A$, $\hat x^B\in B$ and $\theta>0$, such that 
	$$\|x^A-\hat x^A\|\le \theta M \ , \ \
	\|x^B- \hat x^B\|\le \theta M \ \mbox{ and } \
	\|\hat x^A- \hat x^B\|\le \|x^A-  x^B\|  - \theta  \ .$$
	Let $\{r_n\}_{n\ge 1}\subset (0,1)$ be a sequence tending to $0$. Since $A$ is convex, \\
	$\displaystyle x_n^A:=\left(1-r_n\right) x^A+r_n\hat x^A\in A$ for all $n\in\mathbb{N}$. Similarly for $x_n^B$. Then 
	\begin{align*}
	&\|x_n^A-x_n^B\|=\left\|\left(1-r_n\right)(x^A-x^B)+r_n(\hat x^A-\hat x^B)\right\|\\
	&\le \left(1-r_n\right)\|x^A-x^B\|+r_n(\|x^A-x^B\|-\theta)=\|x^A-x^B\|-t_n
	\end{align*}
	where $t_n=r_n\theta$.
	Moreover we have
	$$\left\| x_n^A-x^A\right\|= r_n\left\|\hat x^A-x^A\right\|\le r_n \theta M=t_nM,$$ 
	and similarly for $x_n^B-x^B$.
	
\end{proof}

Thus intrinsic transversality also coincides with tangential transversality and subtransversality in the case of convex sets.
This last equivalence is also straight-forward to obtain via function slopes characterizations -- using that for convex functions the limiting slope and the nonlocal slope 
coincide (cf. e.g. Proposition 2.1(vii) in \cite{Kruger2015}), the result follows from Propositions \ref{prop_coupsubtr} and \ref{prop_couptt}. 

In the papers \cite{Kruger2017} and \cite{TBCV2018} a generalization of intrinsic transversality to Hilbert spaces is derived. It is based on the normal structure - Definition 2(ii) in \cite{Kruger2017} and Definition 3 in \cite{TBCV2018}. Moreover, in paper \cite{TBCV2018} a so called property $(\mathcal{P})$ is introduced. It is in primal space terms and is shown to be equivalent to the afformentioned extension of intrinsic transversality in Hilbert spaces based on the normal structure (Definition 2(ii) in \cite{Kruger2017} and Definition 3 in \cite{TBCV2018}).

%------------------------------
%Hilbert spaces --------------------------

In order to state it we need the following notation - for a normed space $X$,
$$
\mathrm{d}(A, B, \Omega):=\inf _{x \in \Omega, a \in A, b \in B} \max \{\|x-a\|,\|x-b\|\}, \quad \text { for } A, B, \Omega \subset X
$$
with the convention that the infimum over the empty set equals infinity.

Here is the corresponding definition.
\begin{definition}[Property $(\mathcal{P})$] A pair of closed sets $\{A, B\}$ is said to satisfy property $(\mathcal{P})$ at a point $\bar{x} \in A \cap B$ if there are numbers $\alpha \in( 0,1)$ and $\varepsilon>0$ such that for any $a \in$ $(A \setminus B) \cap\ball{\varepsilon}{\bar x},\ b \in(B \setminus A) \cap\ball{\varepsilon}{\bar x}$ and $x \in \ball{\varepsilon}{\bar x}$ with $\|x-a\|=\|x-b\|$ and number
$\delta>0,$ there exists $\rho \in( 0, \delta)$ satisfying
$$
\mathrm{d}\left(A \cap \ball{\lambda}{a}, B \cap \ball{\lambda}{b}, \ball{\rho}{ x}\right)+\alpha \rho \leq\|x-a\|, \text { where } \lambda:=(\alpha+1 / \sqrt{\varepsilon}) \rho
$$
\end{definition}

It is clearly equivalent to the following sequential form

\begin{definition}[Property $(\mathcal{P'})$] A pair of closed sets $\{A, B\}$ is said to satisfy property $(\mathcal{P'})$ at a point $\bar{x} \in A \cap B$ if there are numbers $\alpha \in( 0,1)$ and $\varepsilon>0$ such that for any $a \in$ $(A \setminus B) \cap\ball{\varepsilon}{\bar x},\ b \in(B \setminus A) \cap\ball{\varepsilon}{\bar x}$ and $x \in \ball{\varepsilon}{\bar x}$ with $\|x-a\|=\|x-b\|$ there exists a sequence $s_n \searrow 0, $ satisfying
$$\mathrm{d}\left(A \cap \ball{\lambda_n}{a}, B \cap \ball{\lambda_n}{b}, \ball{s_n}{ x}\right)+\alpha s_n \leq\|x-a\|, \text { where } \lambda_n:=(\alpha+1 / \sqrt{\varepsilon}) s_n
$$ for large enough $n$.

\end{definition}
The following two theorems show that in general normed spaces property $(\mathcal{P})$ implies property $(\mathcal{LT})$, while in Hilbert spaces they are equivalent.

	\begin{theorem}
		Let $X$ be a normed space, $A$ and $B$ be closed subsets of $X$ and $\bar x\in A\cap B$. Assume that $A$ and $B$ satisfy property $(\mathcal{P})$ at $\bar x$. Then they satisfy property $(\mathcal{LT})$ at $\bar x$.
		\end{theorem}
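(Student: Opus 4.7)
The plan is to reduce property $(\mathcal{LT})$ to property $(\mathcal{P})$ by a direct midpoint construction, exploiting the linear structure of the ambient normed space (which is only used for the midpoint itself). Let $\alpha\in(0,1)$ and $\varepsilon>0$ be the constants from property $(\mathcal{P})$, and aim to establish property $(\mathcal{LT})$ with $\varepsilon':=\varepsilon/2$ (say) and a suitable $\theta$ to be identified at the end. Given $x^A\in (A\setminus B)\cap \mathbf{\bar B}_{\varepsilon'}(\bar x)$ and $x^B\in (B\setminus A)\cap \mathbf{\bar B}_{\varepsilon'}(\bar x)$ (necessarily $x^A\neq x^B$, since otherwise they would lie in $A\cap B$), I set $a:=x^A$, $b:=x^B$, and choose the vector midpoint $x:=(a+b)/2$. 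Then $\|x-a\|=\|x-b\|=\tfrac12\|a-b\|$, matching exactly the hypothesis of property $(\mathcal{P})$, and $\|x-\bar x\|\le \max\{\|a-\bar x\|,\|b-\bar x\|\}\le \varepsilon'\le \varepsilon$ places $x$ in the required ball.

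Next, for each $n\in\mathbb{N}$ I apply property $(\mathcal{P})$ with $\delta:=1/n$ to obtain $\rho_n\in(0,1/n)$ with
$$\mathrm{d}\bigl(A\cap\mathbf{\bar B}_{\lambda_n}(a),\,B\cap\mathbf{\bar B}_{\lambda_n}(b),\,\mathbf{\bar B}_{\rho_n}(x)\bigr)\le \tfrac12\|a-b\|-\alpha\rho_n,$$
where $\lambda_n:=(\alpha+1/\sqrt{\varepsilon})\rho_n$. Since $\mathrm{d}$ is an infimum, I allow an approximation slack of $\alpha\rho_n/2$ and extract $x_n\in\mathbf{\bar B}_{\rho_n}(x)$, $a_n\in A\cap\mathbf{\bar B}_{\lambda_n}(a)$ and $b_n\in B\cap\mathbf{\bar B}_{\lambda_n}(b)$ with
$$\max\{\|x_n-a_n\|,\|x_n-b_n\|\}\le \tfrac12\|a-b\|-\tfrac{\alpha\rho_n}{2}.$$

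Routing the triangle inequality through $x_n$ then gives $\|a_n-b_n\|\le \|a_n-x_n\|+\|x_n-b_n\|\le \|a-b\|-\alpha\rho_n$. Setting $t_n:=\lambda_n$, so that $t_n\searrow 0$, and $\theta:=\alpha/(\alpha+1/\sqrt{\varepsilon})$, the estimate rewrites as $\|a_n-b_n\|\le\|a-b\|-\theta\, t_n$, while by construction $\|a_n-a\|\le \lambda_n=t_n$ and $\|b_n-b\|\le t_n$. Recalling $a=x^A$ and $b=x^B$, the sequences $\{a_n\}\subset A$, $\{b_n\}\subset B$ and $\{t_n\}$ witness property $(\mathcal{LT})$ at $\bar x$ with constants $\varepsilon'$ and $\theta$.

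The only real obstacle is bookkeeping. Property $(\mathcal{P})$ involves three coupled parameters $\rho,\lambda,\delta$ and property $(\mathcal{LT})$ only one sequence $t_n$ governing both the approximation tolerance and the decrease rate with a single constant $\theta$; the key observation is that $\lambda_n$ and $\rho_n$ are linked by a fixed multiplicative factor, which lets $t_n:=\lambda_n$ simultaneously absorb both roles and yield a uniform $\theta>0$. The rest of the argument is a single triangle inequality, and the midpoint construction is precisely what forces the equidistance condition $\|x-a\|=\|x-b\|$ that enables us to invoke $(\mathcal{P})$ at all.
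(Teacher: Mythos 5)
Your proof is correct and follows essentially the same route as the paper's: the vector midpoint $x=(a+b)/2$ to force the equidistance hypothesis, extraction of an approximate minimizing triple with slack of order $\alpha\rho_n$, a triangle inequality through $x_n$, and the identification $t_n:=\lambda_n$, $\theta:=\alpha/(\alpha+1/\sqrt{\varepsilon})$ (the paper merely phrases the application of $(\mathcal{P})$ via its sequential form $(\mathcal{P}')$ rather than invoking it with $\delta=1/n$, and keeps the same $\varepsilon$ instead of $\varepsilon/2$). The only cosmetic point is that $\rho_n\in(0,1/n)$ need not be monotone, so to get $t_n\searrow 0$ literally one passes to a decreasing subsequence.
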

		
		\begin{proof}
		    
	Let $A$ and $B$ satisfy property $(\mathcal{P'})$ with constants $\varepsilon$ and $\alpha$. Fix any $a \in$ $(A \setminus B) \cap\ball{\varepsilon}{\bar x},\ b \in(B \setminus A) \cap\ball{\varepsilon}{\bar x}$
	and set $\displaystyle x=\frac{a+b}{2}\in\ball{\varepsilon}{\bar x}.$
	Propety $(\mathcal{P'})$ implies that there exists a sequence $s_n\searrow 0$ such that
	$$\mathrm{d}\left(A \cap \ball{\lambda_n}{a}, B \cap \ball{\lambda_n}{b}, \ball{s_n}{ x}\right)+\alpha s_n \leq\frac{1}{2}\|a-b\|, \text { where } \lambda_n:=(\alpha+1 / \sqrt{\varepsilon}) s_n$$
	There exist $a_n\in A \cap \ball{\lambda_n}{a}$, $b_n\in B \cap \ball{\lambda_n}{B}$ and $x_n\in \ball{s_n}{x}$
	such that 
$$\|x_n-a_n\|\le	\mathrm{d}\left(A \cap \ball{\lambda_n}{a}, B \cap \ball{\lambda_n}{b}, \ball{s_n}{ x}\right)+\frac{1}{2}\alpha s_n,$$
$$\|x_n-b_n\|\le	\mathrm{d}\left(A \cap \ball{\lambda_n}{a}, B \cap \ball{\lambda_n}{b}, \ball{s_n}{ x}\right)+\frac{1}{2}\alpha s_n.$$
Summing the latter two inequalities we obtain
$$\|x_n-a_n\|+\|x_n-b_n\|\le	2\mathrm{d}\left(A \cap \ball{\lambda_n}{a}, B \cap \ball{\lambda_n}{b}, \ball{s_n}{ x}\right)+\alpha s_n$$
$$\le \|a-b\|-2\alpha s_n+\alpha s_n=\|a-b\|-\alpha s_n$$
The triangle inequality implies 
$$\|a_n-b_n\|\le \|x_n-a_n\|+\|x_n-b_n\|\le \|a-b\|-\alpha s_n$$
Setting $t_n:=\lambda_n$ we obtain
	
		$$\|a_n-a\|\le t_n,\ \ \|b_n-b\|\le t_n,\ \ \|a_n-b_n\| \le \|a-b\| - t_n \frac{\alpha}{\alpha+1/\sqrt{\varepsilon}}\, .$$
		Thus property $(\mathcal{LT})$ holds with $\varepsilon$ and $\displaystyle\theta:=\frac{\alpha}{\alpha+1/\sqrt{\varepsilon}}.$
		
		\end{proof}
		
		\begin{theorem}
		Let $X$ be a Hilbert space, $A$ and $B$ be closed subsets of $X$ and $\bar x\in A\cap B$. Assume that $A$ and $B$ satisfy property $(\mathcal{LT})$ at $\bar x$. Then they satisfy property $(\mathcal{P})$ at $\bar x$.
		\end{theorem}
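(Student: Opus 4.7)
The plan is to apply property $(\mathcal{LT})$ to the given pair $(a,b)$ and then construct the required point $x'$ using the Hilbert-space projection onto the perpendicular bisector of the segment $[a_n,b_n]$ produced by $(\mathcal{LT})$. Write $d=\|a-b\|$, $m=(a+b)/2$, $v=x-m$, $r=\|x-a\|=\|x-b\|$. In the Hilbert setting the equidistance condition $\|x-a\|=\|x-b\|$ is equivalent to $\langle v,a-b\rangle=0$, and the Pythagorean identity gives $r^2=\|v\|^2+d^2/4$; the ratio $\|v\|/r$ is precisely the slope at $x$ of the convex function $y\mapsto\max\{\|y-a\|,\|y-b\|\}$. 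Fix a threshold $\alpha_0$ of order $\theta$ (for instance $\alpha_0=\theta/4$, where $\theta$ is the constant from $(\mathcal{LT})$) and split into two regimes.

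\emph{Regime 1} ($\|v\|\ge\alpha_0 r$, i.e.\ $x$ is not near the midpoint): do not use $(\mathcal{LT})$. Set $a'=a$, $b'=b$ and $x_s=x+s(m-x)/\|m-x\|$ for $s>0$. A direct application of Pythagoras gives $\|x_s-a\|^2=(\|v\|-s)^2+d^2/4$, and the required inequality $\|x_s-a\|+\alpha s\le r$ reduces to the elementary $s(1-\alpha^2)\le 2(\|v\|-r\alpha)$, which holds for every sufficiently small $s>0$ provided $\alpha<\alpha_0$. Any $\rho\in(0,\delta)$ is realized by choosing $s=\rho$, and $\|a'-a\|\le\lambda$ is trivial since $a'=a$.

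\emph{Regime 2} ($\|v\|<\alpha_0 r$, so $d$ is close to $2r$): apply $(\mathcal{LT})$ to extract $a_n\in A$, $b_n\in B$ and $t_n\searrow 0$ with $\|a_n-a\|,\|b_n-b\|\le t_n$ and $\|a_n-b_n\|\le d-t_n\theta$. Let $m_n=(a_n+b_n)/2$, $u_n=(a_n-b_n)/\|a_n-b_n\|$, $\sigma_n=\langle x-m_n,u_n\rangle$, and let $v_n=x-m_n-\sigma_n u_n$ be the component of $x-m_n$ perpendicular to $u_n$. Take $x_n=m_n+v_n$, the orthogonal projection of $x$ onto the perpendicular bisector of $[a_n,b_n]$. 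Pythagoras gives $\|x_n-a_n\|^2=\|v_n\|^2+\|a_n-b_n\|^2/4$. Expanding $\|v_n\|^2=\|x-m_n\|^2-\sigma_n^2$, bounding the cross term by $|\langle v,m-m_n\rangle|\le\|v\|t_n$ and the gain by $(d^2-\|a_n-b_n\|^2)/4\ge dt_n\theta/2-O(t_n^2)$, and using $d\ge 2r\sqrt{1-\alpha_0^2}$, the regime-2 hypothesis forces the gain to dominate:
\[
\|x_n-a_n\|^2\le r^2-\beta_0\, r\theta\, t_n+O(t_n^2)
\]
for some $\beta_0>0$. Together with the elementary estimate $|\sigma_n|\le t_n(1+8\|v\|/d)=O(t_n)$ (obtained from $\|u_n-u\|\le 8t_n/d$ and $\|m-m_n\|\le t_n$), this gives $\|x_n-a_n\|+\alpha|\sigma_n|\le r$ for $\alpha$ a sufficiently small multiple of $\theta$.

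The main obstacle is that the projection choice yields $\rho_n=\|x_n-x\|=|\sigma_n|$, which can be arbitrarily small (even zero), whereas $(\mathcal{P}')$ requires $\rho_n\ge t_n/(\alpha+1/\sqrt{\varepsilon})$ in order to satisfy $\|a_n-a\|\le t_n\le\lambda_n$. The remedy is to replace $v_n$ by $v_n+\eta\zeta$ for a unit vector $\zeta\perp u_n$ (for instance $\zeta=-v_n/\|v_n\|$ when $v_n\ne 0$, arbitrary perpendicular direction otherwise), which keeps $x_n$ on the perpendicular bisector of $[a_n,b_n]$ and enlarges $\rho_n$ to $\sqrt{\sigma_n^2+\eta^2}$; the extra contribution $\eta^2$ to $\|x_n-a_n\|^2$ can be absorbed into the $O(t_n^2)$ error provided $\eta$ is chosen of order $t_n$. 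The window of admissible $\rho_n$ is then non-empty for large $n$ because the required lower bound $t_n/(\alpha+1/\sqrt{\varepsilon})$ is itself of order $t_n$. The $1/\sqrt{\varepsilon}$ term in $\lambda_n$ is precisely what provides the necessary slack between $\rho_n$ and $\lambda_n$, and choosing $\varepsilon$ of order $\theta^2$ (and $\alpha$ of order $\theta$) makes everything work uniformly.
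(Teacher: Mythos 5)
Your proposal is correct and follows essentially the same route as the paper's proof: the same two-regime split according to how far $x$ is from the midpoint of $a$ and $b$, the same midpoint-sliding argument (without invoking $(\mathcal{LT})$) in the far regime, and in the near regime the same use of $(\mathcal{LT})$ followed by projecting $x$ onto the perpendicular bisector of $[a_n,b_n]$ with a correction that makes the displacement of order $t_n$ so that $a_n\in A\cap\ball{\lambda_n}{a}$ is compatible with $x_n\in\ball{s_n}{x}$ --- your additive perturbation $\eta\zeta$ plays exactly the role of the paper's scaling factor $\gamma_n$. The only cosmetic differences are the threshold parametrization ($\|v\|/r\ge\theta/4$ versus the paper's $\psi=2\|v\|/\|a-b\|>\theta/5$) and that your Regime 2 absorbs the degenerate case $v=0$, which the paper handles as a separate third case.
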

		
		\begin{proof}
		Let $A$ and $B$ satisfy property $(\mathcal{LT})$ with constants  $\varepsilon$ and $\theta$. We may assume $\varepsilon<1.$ We shall check that $A$ and $B$ satisfy property $(\mathcal{P'})$ with the same $\varepsilon$ and $\alpha$ to be specified later. To this end fix any $a \in$ $(A \setminus B) \cap\ball{\varepsilon}{\bar x},\ b \in(B \setminus A) \cap\ball{\varepsilon}{\bar x}$ and $x \in \ball{\varepsilon}{\bar x}$ with $\|x-a\|=\|x-b\|$. Denote $v:=x-(a+b)/2$.
	The equation $\|x-a\|=\|x-b\|$ implies $(v,a-b)=0$. Moreover $\|x-a\|=\|(a-b)/2+v\|.$ 
	Denote $\displaystyle\psi=\frac{2\|v\|}{\|a-b\|}.$

	Assume $\displaystyle\psi> \frac{ \theta}{5}$. Set  $\displaystyle s_n=\frac{\|v\|}{n}$. Then putting  $x_n:=\displaystyle\frac{a+b}{2}+\frac{n-1}{n}v $ we obtain $x_n\in \ball{s_n}{x}$, $\|x_n-a\|=\|x_n-b\|$ and
	$$\frac{1}{s_n}(\|x-a\|-\mathrm{d}\left(A \cap \ball{s_n}{a}, B \cap \ball{s_n}{b}, \ball{s_n}{ x}\right))\ge\frac{1}{s_n}(\|x-a\|-\|x_n-a\|)=$$
$$\frac{1}{s_n}\left(\left\|\frac{a-b}{2}+v\right\|-\left\|\frac{a-b}{2}+\frac{n-1}{n}v\right\|\right)=$$
$$\frac{2n-1}{n}\frac{\|v\|}{\left\|\frac{a-b}{2}+v\right\|+\left\|\frac{a-b}{2}+\frac{n-1}{n}v\right\|}\ge\frac{2n-1}{n}\frac{\psi\|v\|}{2\sqrt{\psi^2+1}\|v\|}\ge$$
$$\ge\frac{\psi}{2\sqrt{\psi^2+1}}=:f(\psi).$$
Observe that $f$ is increasing in $[0,\infty)$, so that 
$$\frac{1}{s_n}(\|x-a\|-\mathrm{d}\left(A \cap \ball{s_n}{a}, B \cap \ball{s_n}{b}, \ball{s_n}{ x}\right))\ge f\left(\frac{\theta}{5}\right).$$

Now assume $\displaystyle 0<\psi\le \frac{\theta}{5}$. Then $v\ne 0$. Since $A$ and $B$ satisfy property $(\mathcal{LT})$ there exist sequences $t_n\searrow 0$,  $\{a_n\}_{n\ge 1} \subset (A\setminus B)$ and $\{b_n\}_{n\ge 1} \subset (B\setminus A)$ such that for all $n$ 
		\begin{equation}\label{LT}
		\|a_n-a\|\le t_n,\ \ \|b_n-b\|\le t_n,\ \ \|a_n-b_n\| \le \|a-b\| - t_n \theta\, .
		\end{equation}

	 Denote $$u_n=\frac{a+b}{2}+v-\frac{a_n+b_n}{2}\ \ \text{and}\ \ w_n=\frac{a_n-b_n}{\|a_n-b_n\|}.$$
Observe that 
$$|(u_n,w_n)|\le\left|\left(\frac{a+b-a_n-b_n}{2},\frac{a_n-b_n}{\|a_n-b_n\|}\right)\right|+\left|\frac{1}{\|a_n-b_n\|}(v,(a-a_n)-(b-b_n))\right|$$
$$\le t_n\left(1+\frac{2\|v\|}{\|a_n-b_n\|}\right)\le t_n\left(1+2\psi\right)$$
for large enough $n$. Thus $(u_n,w_n)\to 0$. Clearly $u_n\to v$. Hence for large enough $n$ holds $\|u_n\|^2-(u_n,w_n)\ge \|v\|^2/2$. For these $n$ define
$$\gamma_n:=1-\sqrt{\frac{(1+2\psi)^2 t_n^2-(u_n, w_n)^2}{\|u_n\|^2-(u_n, w_n)^2}}$$
and $$v_n:=\gamma_n(u_n-(u_n,w_n)w_n).$$
Then $\gamma_n\in(0,1)$ for large enough $n$ (and actually $\gamma_n\to 1$ since the numerator in the root tends to $0$, and the denominator stays away from $0$). It is easy to see that  $\left(v_n,w_n\right)=0$ so that $\left(v_n,a_n-b_n\right)=0$. Next we observe that
$$\|v_n-u_n\|^2=\gamma_n^2(\|u_n\|^2-(u_n,w_n)^2)-2\gamma_n(\|u_n\|^2-(u_n,w_n)^2)+\|u_n\|^2=$$
$$=(\gamma_n-1)^2(\|u_n\|^2-(u_n,w_n)^2)+(u_n,w_n)^2=(1+2\psi)^2t_n^2.$$

Thus $\|v_n-u_n\|=(1+2\psi)t_n$. Set $s_n=(1+2\psi)t_n\ge t_n$. Set $x_n:=\displaystyle\frac{a_n+b_n}{2}+v_n$. Since $\|x-x_n\|=\|v_n-u_n\|$, we have $x_n\in \ball{s_n}{x}$ and $\|x_n-a_n\|=\|x_n-b_n\|$. Moreover, using $(\ref{LT})$, $a_n\in A\cap\ball{s_n}{a},\ b_n\in B\cap\ball{s_n}{b}$. Thus
	$$\frac{1}{s_n}(\|x-a\|-\mathrm{d}\left(A \cap \ball{s_n}{a}, B \cap \ball{s_n}{b}, \ball{s_n}{ x}\right))\ge\frac{1}{s_n}(\|x-a\|-\|x_n-a_n\|)=$$
$$\frac{1}{(1+2\psi)t_n}\left(\left\|\frac{a-b}{2}+v\right\|-\left\|\frac{a_n-b_n}{2}+v_n\right\|\right)=$$
$$\frac{1}{(1+2\psi)t_n}\frac{(\left\|\frac{a-b}{2}\right\|-\left\|\frac{a_n-b_n}{2}\right\|)(\left\|\frac{a-b}{2}\right\|+\left\|\frac{a_n-b_n}{2}\right\|)}{\left\|\frac{a-b}{2}+v\right\|+\left\|\frac{a_n-b_n}{2}+v_n\right\|}+\frac{1}{(1+2\psi)t_n}\frac{\|v\|^2-\|v_n\|^2}{\left\|\frac{a-b}{2}+v\right\|+\left\|\frac{a_n-b_n}{2}+v_n\right\|}.$$
For the first summand, using $(\ref{LT}),$ we obtain
$$\frac{1}{(1+2\psi)t_n}\frac{(\left\|\frac{a-b}{2}\right\|-\left\|\frac{a_n-b_n}{2}\right\|)(\left\|\frac{a-b}{2}\right\|+\left\|\frac{a_n-b_n}{2}\right\|)}{\left\|\frac{a-b}{2}+v\right\|+\left\|\frac{a_n-b_n}{2}+v_n\right\|}\ge \frac{\theta}{2(1+2\psi)}\frac{\frac{8}{5}\frac{\|a-b\|}{2}}{\frac{12}{5}\left\|\frac{a-b}{2}+v\right\|}$$
$$\ge\frac{\theta}{3}\frac{1}{(1+2\psi)\sqrt{\psi^2+1}}.$$
For the second summand,  $$\|v\|^2-\|v_n\|^2=(1-\gamma_n^2)\|v\|^2+\gamma_n^2(\|v\|^2-\|u_n\|^2)+\gamma_n^2(u_n, w_n)^2\ge$$
$$\ge\gamma_n^2(\|v\|^2-\|u_n\|^2)=-\gamma_n^2\left(\left(a+b-a_n-b_n,v\right)+\left\|\frac{a+b-a_n-b_n}{2}\right\|^2\right)$$
$$\ge -\gamma_n^2(a+b-a_n-b_n,v)-\gamma_n^2t_n^2\ge-2\gamma_n^2t_n\|v\|-\gamma_n^2t_n^2$$
since $\displaystyle\left\|\frac{a+b-a_n-b_n}{2}\right\|^2\le t_n^2.$
Thus 
$$\frac{1}{(1+2\psi)t_n}\frac{\|v\|^2-\|v_n\|^2}{\left\|\frac{a-b}{2}+v\right\|+\left\|\frac{a_n-b_n}{2}+v_n\right\|}\ge-\frac{\gamma_n^2}{1+2\psi}\left(\frac{2\|v\|}{\left\|\frac{a-b}{2}+v\right\|+\left\|\frac{a_n-b_n}{2}+v_n\right\|}+t_n\right)$$
$$\ge-\frac{\gamma_n^2}{1+2\psi}\left(\frac{2\|v\|}{\frac{8}{5}\left\|\frac{a-b}{2}+v\right\|}+t_n\right)\ge-\frac{4\psi}{3(1+2\psi)\sqrt{\psi^2+1}}.$$
Consequently,
$$\frac{1}{(1+2\psi)t_n}\left(\left\|\frac{a-b}{2}+v\right\|-\left\|\frac{a_n-b_n}{2}+v_n\right\|\right) \ge\frac{\theta-4\psi}{3(1+2\psi)\sqrt{\psi^2+1}}=:g(\psi).$$ 
Observe that $g$ is decreasing in $[0,\infty)$, so that 
$$\frac{1}{s_n}(\|x-a\|-\mathrm{d}\left(A \cap \ball{s_n}{a}, B \cap \ball{s_n}{b}, \ball{s_n}{ x}\right))\ge g\left(\frac{\theta}{5}\right)>0.$$
If $\psi=0$, then $v=0$. Observe that 
$$\left\|\frac{a+b}{2}-\frac{a_n+b_n}{2}\right\|\le t_n.$$
Set  $\displaystyle s_n=t_n$. Then $x_n:=\displaystyle\frac{a_n+b_n}{2}\in \ball{s_n}{x}$, $\|x_n-a_n\|=\|x_n-b_n\|$ and as before $a_n\in A\cap\ball{s_n}{a},\ b_n\in B\cap\ball{s_n}{b}$. Thus 
	$$\frac{1}{s_n}(\|x-a\|-\mathrm{d}\left(A \cap \ball{s_n}{a}, B \cap \ball{s_n}{b}, \ball{s_n}{ x}\right))\ge\frac{1}{s_n}(\|x-a\|-\|x_n-a_n\|)=$$
$$\frac{1}{t_n}\left(\left\|\frac{a+b}{2}-a\right\|-\left\|\frac{a_n+b_n}{2}-a_n\right\|\right)=\frac{\|a-b\|-\|a_n-b_n\|}{2t_n}\ge\frac{\theta}{2}.$$
We conclude 
\begin{equation}\label{last}
    \frac{1}{s_n}(\|x-a\|-\mathrm{d}\left(A \cap \ball{s_n}{a}, B \cap \ball{s_n}{b}, \ball{s_n}{ x}\right))\ge
\end{equation}
$$\min\left\{\frac{\theta}{2},f\left(\frac{\theta}{5}\right),g\left(\frac{\theta}{5}\right)\right\}>0.$$
Set $\displaystyle\alpha:=\min\left\{\frac{\theta}{2},f\left(\frac{\theta}{5}\right),g\left(\frac{\theta}{5}\right)\right\}$ and $\displaystyle\lambda_n:=\left(\alpha+\frac{1}{\sqrt{\varepsilon}}\right)s_n$. Observe that $\lambda_n> s_n$ since $\varepsilon<1.$ Thus, using (\ref{last}) we obtain 
$$  \frac{1}{s_n}(\|x-a\|-\mathrm{d}\left(A \cap \ball{\lambda_n}{a}, B \cap \ball{\lambda_n}{b}, \ball{s_n}{ x}\right))\ge$$
$$  \frac{1}{s_n}(\|x-a\|-\mathrm{d}\left(A \cap \ball{s_n}{a}, B \cap \ball{s_n}{b}, \ball{s_n}{ x}\right))\ge\alpha$$
 
	Finally, property $(\mathcal{P'})$ holds with $\varepsilon$ and $\displaystyle\alpha$.
	\end{proof}

% Authors must disclose all relationships or interests that 
% could have direct or potential influence or impart bias on 
% the work: 
%
% \section*{Conflict of interest}
%
% The authors declare that they have no conflict of interest.

% BibTeX users please use one of
%\bibliographystyle{spbasic}      % basic style, author-year citations
%\bibliographystyle{spmpsci}      % mathematics and physical sciences
%\bibliographystyle{spphys}       % APS-like style for physics
%\bibliography{}   % name your BibTeX data base

% Non-BibTeX users please use

\end{document}